\newif\ifcolorcomments
\newcommand{\allowcomments}[4]{
\newcommand{#1}[1]{\ifdraft{\ifcolorcomments{\textcolor{#4}{##1 --#3}}\else{\textsl{ ##1 \ --#3}}\fi}\else{}\fi}
}
\allowcomments{\commumtaz}{MH}{Mumtaz}{green}
\allowcomments{\comwang}{BW}{BWWang}{blue}
\allowcomments{\comkle}{DK}{DK}{magenta}
\allowcomments{\comnick}{NW}{Nick}{red}
\def\bc{\begin{center}}
\def\ec{\end{center}}
\def\be{\begin{equation}}
\def\ee{\end{equation}}
\def\N{\mathbb N}
\def\Z{\mathbb Z}
\def\Q{\mathbb Q}
\def\R{\mathbb R}
\def\H{\mathcal H}
\newcommand\hdim{\dim_{\mathrm H}}
\def\Z{\mathbb Z}
\newtheorem{lem}{Lemma}[section]
\newtheorem{prob}[lem]{Problem}
\newtheorem{pro}[lem]{Proposition}
\newtheorem{thm}[lem]{Theorem}
\newtheorem{exa}[lem]{Example}
\newtheorem{cor}[lem]{Corollary}
\newtheorem{rem}[lem]{Remark}
\numberwithin{equation}{section}
\newif\ifdraft\drafttrue
\newcommand{\subscript}[2]{$#1 _ #2$}
\begin{document}



\subjclass[2010]  {}

\title[Hausdorff measure  and Dirichlet non-improvable numbers]{Hausdorff measure  of sets of Dirichlet non-improvable numbers}

\author[M. Hussain]{M.~Hussain}
\address{Mumtaz Hussain, Department of Mathematics and Statistics, La Trobe University, PoBox199, Bendigo 3552, Australia. }\email{m.hussain@latrobe.edu.au}
\author[D. Kleinbock]{D.~Kleinbock}
\address {Dmitry Kleinbock, Brandeis University, Waltham MA 02454-9110.} \email{kleinboc@brandeis.edu}
\author[N. Wadleigh]{N.~Wadleigh}
\address{Nick Wadleigh, Brandeis University, Waltham MA 02454-9110.}  \email{wadleigh@brandeis.edu}
\author[B-W. Wang]{B-W.~Wang}
\address{Bao-wei Wang, School  of  Mathematics  and  Statistics,  Huazhong  University  ofScience  and  Technology, 430074 Wuhan, China}
 \email{bwei\_wang@hust.edu.cn}

\thanks{The research of  M.~Hussain is supported by the Endeavour Fellowship,  of D.\ Kleinbock  by NSF grant  
DMS-1600814, and of B-W.~Wang by NSFC of China (No.\ 11471130 and NCET-13-0236).}

\maketitle

\begin{abstract}

Let $\psi:\mathbb R_+\to\mathbb R_+$ be a non-increasing function. A real number $x$ is said to be $\psi$-Dirichlet improvable
if it admits an improvement to Dirichlet's theorem in the following sense:  
the system $$|qx-p|< \, \psi(t) \ \ {\text{and}} \ \ |q|<t$$
has a non-trivial integer solution for all large enough $t$. Denote the collection of such points by $D(\psi)$. In this paper we prove that the Hausdorff measure of the complement $D(\psi)^c$ (the set of  $\psi$-Dirichlet non-improvable numbers) obeys a zero-infinity law for a large class of dimension functions.
 Together with the Lebesgue measure-theoretic results established by Kleinbock \& Wadleigh (2016), our results contribute to building a complete metric theory for the
 set of Dirichlet non-improvable numbers.

\end{abstract}

\section{ Introduction}

\noindent At its most fundamental level, the theory of Diophantine approximation is concerned with the question of how well a real number can be approximated by rationals. A qualitative answer is provided by the fact that the set of rational numbers is dense in the real numbers. Seeking a quantitative answer leads to the theory of metric Diophantine approximation. Dirichlet's theorem (1842) is the starting point in this theory.

To simplify the presentation, we start by fixing some notation. We use $a\gg b$ to indicate that $|a/b|$ is sufficiently large, and $a\asymp b$ to indicate that $|a/b|$ is bounded between unspecified positive constants. We use $\lambda(\cdot)$,  $\hdim$ and $\mathcal{H}^s$ to denote the Lebesgue measure, Hausdorff dimension, and $s$-dimensional Hausdorff measure, respectively. We use `i.m.' for {\em `infinitely many'.}

\subsection{Diophantine approximation: improving Dirichlet's theorem }

\begin{thm}[Dirichlet 1842]\label{Dir}
\label{Dirichletsv}
 \noindent Given $x\in \R$ and $t>1
$, there exist integers $p,q$
 such that
  \begin{equation*}\label{eqdir} \left\vert qx-p\right\vert\leq 1/t \quad{\rm and} \quad  1\leq{q}<{t}. \end{equation*}

\end{thm}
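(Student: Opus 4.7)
The approach is the \emph{pigeonhole principle}---known in this context as Dirichlet's box principle---applied to the fractional parts of integer multiples of $x$. First I would set $Q := \lceil t \rceil - 1$, so that $1 \leq Q < t$ (using $t>1$, whence $\lceil t\rceil \geq 2$) and $Q + 1 = \lceil t \rceil \geq t$; in particular $1/(Q+1) \leq 1/t$. Then I would examine the $Q + 2$ numbers
\[
0,\ \{x\},\ \{2x\},\ \ldots,\ \{Qx\},\ 1
\]
lying in $[0,1]$, where $\{\cdot\}$ denotes the fractional part.

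The main step will be to partition $[0,1]$ into $Q + 1$ subintervals, each of length $1/(Q+1)$, and to apply pigeonhole: since $Q+2$ points sit in $Q+1$ boxes, two of them must share a box. If both are fractional parts $\{q_1 x\}$ and $\{q_2 x\}$ with $0 \leq q_1 < q_2 \leq Q$, then $|\{q_2 x\}-\{q_1 x\}| \leq 1/(Q+1)$; setting $q := q_2 - q_1$ and $p := \lfloor q_2 x \rfloor - \lfloor q_1 x \rfloor$, this rearranges to $|qx - p| \leq 1/(Q+1) \leq 1/t$ with $1 \leq q \leq Q < t$. If instead one of the endpoints $0$ or $1$ shares a box with some $\{qx\}$ for $1 \leq q \leq Q$, then either $\{qx\} \leq 1/(Q+1)$ or $1 - \{qx\} \leq 1/(Q+1)$, and taking $p = \lfloor qx \rfloor$ or $p = \lceil qx \rceil$ accordingly yields $|qx - p| \leq 1/(Q+1) \leq 1/t$, again with $1 \leq q \leq Q < t$.

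There is no genuine obstacle here; this is the archetypal application of the pigeonhole principle. The only mildly delicate point is calibrating the box count so that the conclusion matches the statement precisely, namely $|qx-p| \leq 1/t$ together with the strict bound $q < t$. Including both endpoints $0$ and $1$ among the points---rather than merely the $Q+1$ fractional parts $\{qx\}$---ensures that pigeonholing $Q+2$ objects into $Q+1$ intervals of width at most $1/t$ always forces the required conclusion, even in the slightly awkward case when $t$ itself is an integer.
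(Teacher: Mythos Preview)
Your argument is correct and is precisely the standard pigeonhole proof. The paper does not actually supply a proof of Theorem~\ref{Dir}; it merely states the result and later remarks that it ``was proved by a simple pigeon-hole argument,'' which is exactly the route you take.
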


An easy consequence (known before Dirichlet) is the following global statement concerning the `rate' of rational approximation to any real number.
\begin{cor}
 \noindent For any $x\in \R$, there exist infinitely many integers $p$ and $q > 0 $ such that
\begin{equation*}\label{side1}
\left\vert qx-p\right\vert<1/q.
\end{equation*}
\end{cor}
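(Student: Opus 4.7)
The plan is to deduce the corollary by iterating Theorem \ref{Dir} over a sequence of parameters $t \to \infty$ and then arguing that the pairs $(p,q)$ produced cannot all coincide.

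First I would fix $x \in \R$ and, for each integer $n \ge 2$, apply Dirichlet's theorem with the parameter $t = n$. This yields integers $p_n, q_n$ with $1 \le q_n < n$ and $|q_n x - p_n| \le 1/n$. The strict inequality $q_n < n$ gives $1/n < 1/q_n$, so
\begin{equation*}
|q_n x - p_n| \le \frac{1}{n} < \frac{1}{q_n},
\end{equation*}
which is exactly the form required by the corollary. So each application of Dirichlet's theorem delivers one admissible pair.

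The only real content is to show that infinitely many \emph{distinct} pairs $(p,q)$ appear among the $(p_n, q_n)$. I would argue by contradiction: if the set $\{(p_n, q_n) : n \ge 2\}$ were finite, then by the pigeonhole principle some pair $(p,q)$ would arise for arbitrarily large $n$, forcing $|qx - p| \le 1/n$ for arbitrarily large $n$ and hence $qx = p$, i.e.\ $x = p/q \in \Q$. This handles the irrational case. For the rational case $x = a/b$ (with $b > 0$), the corollary is verified directly by taking $(p,q) = (ka, kb)$ for $k = 1, 2, \ldots$, since then $|qx - p| = 0 < 1/q$.

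The main obstacle, if it can be called one, is the distinctness argument; the rest is just unpacking Theorem \ref{Dir}. A minor subtlety worth flagging in the write-up is that Dirichlet's theorem as stated gives $|qx - p| \le 1/t$ (non-strict), which is why the comparison $q_n < n$ is used to upgrade to the strict inequality $|qx - p| < 1/q$ demanded by the corollary.
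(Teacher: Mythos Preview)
Your argument is correct. The paper does not actually supply a proof of this corollary; it simply records it as ``an easy consequence (known before Dirichlet)'' and moves on, so there is nothing to compare your approach against beyond noting that your derivation from Theorem~\ref{Dir} is the standard one.
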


It is quite surprising that most metric theories on Diophantine approximation are intended to strengthen this corollary instead of Dirichlet's original theorem. Since Theorem \ref{Dir} was proved by a simple pigeon-hole argument, there should be a large room for improvement.

Let $\psi:[t_0, \infty)\to \mathbb R_+$ be a  non-increasing function with $t_0\geq 1$  fixed. Consider the set
\begin{equation*}
D(\psi):=\left\{x\in\mathbb R:  \begin{aligned} \exists\, N  \ {\rm such\  that\ the\ system}\ |qx-p|\, <\, \psi(t), \  
 |q|<t\  \\
  \qquad \text{has a nontrivial integer solution for all }t>N\quad
                           \end{aligned}
\right\}.
\end{equation*}
A real number $x$ will
be called
\emph{$\psi$-Dirichlet improvable} if $x\in D(\psi)$, and
elements of the complementary set, $D(\psi)^c$,  will be referred to as \emph{$\psi$-Dirichlet non-improvable} numbers.

In what follows we will use the notation $\psi_1(t) = 1/t$.
Dirichlet's theorem, 
 which essentially implies that $D(\psi_1) = \R$, is sharp in the following sense. {A classical result of Davenport \& Schmidt}   \cite{DaSc70}  implies that for any $\epsilon>0$, $D\big((1-\epsilon)\psi_1\big)$ is a subset of the union of $\Q$ and the set of badly approximable numbers.
Thus  $$\lambda\Big(D\big((1-\epsilon)\psi_1\big)^c\Big)=1.
$$
Moreover, as noticed by Kleinbock \& Wadleigh \cite{KlWad16},  $D(\psi)^c\ne \varnothing$
whenever $\psi$ is non-increasing and $$
 t\psi(t)<1  \ {\text{for all}}\ t\gg 1.
 $$

So it is natural to ask
how small are the corresponding sets, i.e.\ what is the size of the complement $D(\psi)^c$,  in the sense of measure/dimension, for functions $\psi$ which decay {slower} than $(1-\epsilon)\psi_1$ for any $\epsilon > 0$?
 Beyond some particular choices of $\psi$, nothing was known until recently, when Kleinbock \& Wadleigh \cite{KlWad16}
 proved 
 a dichotomy law on the Lebesgue measure of $D(\psi)$.

To state this result, as well as the main results of the present paper, we will introduce an {auxiliary} function\begin{equation}\label{twopsis}{
\Psi(t):=\frac{t\psi(t)}{1-t\psi(t)} = \frac{1}{1-t\psi(t)} - 1;
}\end{equation}
in what follows, $\psi$ and $\Psi$ will always be related by \eqref{twopsis}.

\begin{thm}[{\cite{KlWad16}, Theorem 1.8}]
\label{KW} Let $\psi:[t_0, \infty)\to \mathbb R_+$ be any non-increasing function such that  $\Psi$ as in \eqref{twopsis} (equivalently, the function $t\mapsto t\psi(t)$) is non-decreasing and $$t\psi(t)<1  \ \ \ for \ all \ \  t>t_0.$$
Then if
\begin{equation}\label{kwint}
\sum_{t}\frac{\log{\Psi}(t)}{t{\Psi}(t)}<\infty
\ \ (resp. =\infty)\end{equation}
then $$\lambda\big(D(\psi)^c\big)=0  \ \ ({\rm resp.} \ \lambda\big(D(\psi)\big)=0).$$ \end{thm}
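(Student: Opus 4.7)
The plan is to translate the Diophantine condition defining $D(\psi)^c$ into an equivalent condition on the continued fraction expansion of $x$, and then apply a Borel--Cantelli style argument in a continued fraction dynamical framework.

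\textbf{Step 1: Continued fraction characterization.} Write $x = [a_0; a_1, a_2, \ldots]$ with convergents $p_n/q_n$, and recall the identity $|q_n x - p_n| = (q_n\alpha_{n+1} + q_{n-1})^{-1}$ with $\alpha_{n+1} = [a_{n+1}; a_{n+2}, \ldots]$. For $t \in [q_n, q_{n+1})$, the best approximation property forces the only candidate denominator to be $q = q_n$, so $x$ satisfies the $\psi$-Dirichlet condition at every such $t$ precisely when $|q_n x - p_n| < \psi(t)$ for $t$ approaching $q_{n+1}$ from below. Rewriting this inequality and using $q_{n+1} = a_{n+1}q_n + q_{n-1}$, I would express it as a lower bound of the form $a_{n+1}(x)\,a_{n+2}(x) \gtrsim \Psi(q_{n+1})$, where $\Psi$ is the auxiliary function from \eqref{twopsis}. (The factor $(1 - t\psi(t))^{-1}$ is exactly what produces $\Psi$ in the denominator, which is the whole reason the quantity $\Psi$ appears in Theorem~\ref{KW}.) Consequently, modulo a null set,
\[
D(\psi)^c \;=\; \{x : a_{n+1}(x)\,a_{n+2}(x) \geq \Psi(q_{n+1}(x)) \text{ i.m.\ } n\}.
\]

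\textbf{Step 2: The convergence half.} Let $E_n := \{x \in [0,1] : a_{n+1}a_{n+2} \geq \Psi(q_{n+1})\}$. Partition $[0,1]$ into rank-$n$ cylinders $I_n(c_1,\ldots,c_n)$, each of Lebesgue measure $\asymp 1/(q_n(q_n + q_{n-1}))$. On each cylinder, $q_n$ is essentially constant, and the Gauss measure estimate gives $\lambda\bigl(\{a_{n+1} = k\} \cap I_n\bigr) \asymp \lambda(I_n)/k^2$, from which one obtains $\lambda(E_n \cap I_n) \ll \lambda(I_n) \cdot \log(\Psi(q_n))/\Psi(q_n)$. Summing over cylinders and then over $n$, I would regroup by dyadic windows of $q_n$: the number of pairs $(n, c_1,\ldots,c_n)$ with $q_n \in [2^k, 2^{k+1})$ is, up to bounded factors, $2^k$, so the whole sum collapses to $\sum_{t} \log\Psi(t)/(t\Psi(t))$, which is finite by hypothesis. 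Borel--Cantelli then yields $\lambda(D(\psi)^c) = 0$.

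\textbf{Step 3: The divergence half.} Here the goal is $\lambda(D(\psi)^c) = 1$, which requires a quantitative independence statement for the events $E_n$. The approach is to use the exponential mixing of the Gauss map (equivalently, the rapid decay of correlations established by Philipp, or Sprind\v{z}uk's Borel--Cantelli lemma for the Gauss system): estimate $\lambda(E_m \cap E_n)$ for $m < n$ and show it is essentially $\lambda(E_m)\lambda(E_n)$ plus an error that is summable. This reduces the problem to verifying $\sum_n \lambda(E_n) = \infty$, which by the same dyadic regrouping as in Step 2 is equivalent to divergence of $\sum_t \log\Psi(t)/(t\Psi(t))$.

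\textbf{Main obstacle.} The hardest step is the conversion between sums indexed by the continued-fraction level $n$ and sums indexed by the real variable $t$ appearing in the criterion \eqref{kwint}. This requires carefully handling the fluctuating growth of $q_n$ (no pointwise estimate on $q_n$ is available without excluding a null set), but because we sum over \emph{all} cylinders rather than follow a single orbit, the regrouping by dyadic windows of $q_n$ works by a direct counting argument rather than invoking the L\'evy constant pointwise. A secondary delicate point is managing the joint condition on two consecutive partial quotients $a_{n+1}a_{n+2}$ instead of just one, which introduces additional care in the Gauss-measure bookkeeping and in establishing the near-independence estimates needed in Step~3.
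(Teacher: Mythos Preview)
The paper does not prove this theorem: it is quoted verbatim from \cite{KlWad16} (as ``Theorem 1.8'' there) and used only as background for the Hausdorff-measure refinement that follows. There is therefore no proof in the paper to compare your attempt against.

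That said, your Step~1 is essentially the content of Lemmas~\ref{blah} and~\ref{kwlem} in the paper, with one correction: the continued-fraction criterion is \emph{not} an equality modulo a null set. What the paper establishes is the sandwich $G(\Psi)\subset D(\psi)^c\subset G(\Psi/4)$; the two bounding sets differ only by a constant in the approximating function, which is harmless for the Lebesgue dichotomy but should not be stated as an identity. Your Steps~2--3 then outline a direct Borel--Cantelli/quasi-independence argument for the set $G(\Psi)$, which is indeed how \cite{KlWad16} proceeds. One quantitative slip: the number of cylinder strings $(a_1,\dots,a_n)$ (of any length) with $q_n\in[2^k,2^{k+1})$ is $\asymp 2^{2k}$, not $2^k$, since by the two-to-one correspondence $(a_1,\dots,a_n)\mapsto(q_{n-1},q_n)$ used in \S4 of the paper these are parametrized by coprime pairs $(p,q)$ with $q$ in that range. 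Combined with $|I_n|\asymp q_n^{-2}\asymp 2^{-2k}$ this still gives the bound $\sum_n\lambda(E_n)\ll\sum_q q^{-1}\log\Psi(q)/\Psi(q)$ you want, so the conclusion survives; just fix the count.
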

For example, \begin{eqnarray*}
\lambda(D(\psi)^c)=\left\{
                     \begin{array}{ll}
                       0, & \hbox{{\rm if} \quad$\psi(t)=\frac{1}{t}\Big(1-\frac{1}{\log t (\log \log t)^{2+\epsilon}}\Big)$ {\rm for any} $\epsilon>0$;} \\
                      {\rm full}, & \hbox{{\rm if} \quad $\psi(t)=\frac{1}{t}\Big(1-\frac{1}{\log t (\log \log t)^{2}}\Big)$.}
                     \end{array}
                   \right.
\end{eqnarray*}

Notice that Theorem \ref{KW} fails to distinguish between  sizes of null sets. That is, if the approximating function $\psi$ decreases sufficiently {slowly}, Theorem \ref{KW} tells us that $D(\psi)^c$ is null but gives us no further information about the size of this set. For this purpose, Hausdorff measure and dimension are the appropriate tools. {One of our main results} is as follows:

\begin{thm}\label{dicor} Let $\psi$ be a non-increasing positive function with $t\psi(t)<1$ for all large $t$. Then for any $0\leq s<1$
\begin{equation}\label{hdsum} \H^s(D(\psi)^c)=\begin {cases}
 0 \ & {\rm if } \quad \sum\limits_{t} {t}\left(\frac{1}{{t^2\Psi({t})}} \right)^s
\, < \, \infty;  \\[2ex]
 \infty \ & {\rm if } \quad
 \sum\limits_{t} {t}\left(\frac{1}{{t^2\Psi({t})}} \right)^s \, = \, \infty.
\end {cases}\end{equation}

\end{thm}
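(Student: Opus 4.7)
The plan is to first translate the definition of $D(\psi)^c$ into a condition on continued fraction partial quotients, and then use the resulting $\limsup$ description to attack both halves of (\ref{hdsum}): Hausdorff--Cantelli for the upper bound, and a Cantor construction plus mass distribution principle for the lower bound.

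For the continued fraction reduction, let $p_n/q_n$ denote the convergents of $x$. Since $|q_n x - p_n|$ is the minimum of $|qx-p|$ over $0 < q < q_{n+1}$, a short argument shows that $x \in D(\psi)^c$ if and only if $|q_n x - p_n| \geq \psi(q_{n+1})$ for infinitely many $n$. Expanding $|q_n x - p_n| = 1/(q_{n+1} + q_n \theta_{n+1})$ with $\theta_{n+1} \asymp 1/a_{n+2}$, and using the identity $1/\psi(t) = t + t/\Psi(t)$, this becomes (up to bounded constants) the condition
\[
a_n(x)\,a_{n+1}(x) \geq \Psi(q_n(x)) \ \text{for infinitely many} \ n,
\]
so $D(\psi)^c$ coincides, modulo a null set, with a $\limsup$ set $E$ naturally stratified by continued fraction cylinders.

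For the convergence half, I would apply the Hausdorff--Cantelli lemma to the covering $E \subseteq \bigcup_{n \geq N}\bigcup_{(a_1,\dots,a_n)} F_n(a_1,\dots,a_n)$, where $F_n(a_1,\dots,a_n) := I_n(a_1,\dots,a_n) \cap \{x : a_{n+1}(x) \geq \Psi(q_n)/a_n\}$ is an interval abutting $p_n/q_n$ of length comparable to $\min(1,\, a_n/\Psi(q_n))/q_n^2$. Grouping the sum $\sum_n \sum_{(a_1,\dots,a_n)} |F_n|^s$ by the value of $q_n = t$, the elementary count $\#\{(n,a_1,\dots,a_n) : q_n = t,\ a_n = a\} \leq C\, t/a^2$ (arising from $q_{n-1} \in (t/(a+1), t/a]$ being coprime to $t$) combined with $\sum_{a \geq 1} a^{s-2} < \infty$ for $s < 1$ bounds the inner sum at each $t$ by a constant times $t^{1-2s}/\Psi(t)^s$. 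Summing over $t$ recovers the series in (\ref{hdsum}), so its convergence forces $\mathcal H^s(E) = 0$ and hence $\mathcal H^s(D(\psi)^c) = 0$.

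For the divergence half, I would construct a Cantor-like subset $F \subset E$ along a rapidly growing sequence $(n_k)$: require $a_j(x) \in \{1,2\}$ for every $j \notin \{n_k+1 : k \geq 1\}$ while $a_{n_k+1}(x) \in [\lceil\Psi(q_{n_k})\rceil,\, 2\lceil\Psi(q_{n_k})\rceil)$ for each $k$, ensuring $a_{n_k}(x)\,a_{n_k+1}(x) \geq \Psi(q_{n_k})$ at every specified scale. Placing the natural uniform probability measure $\mu$ on $F$, estimating $\mu(B(x,r))$ by cases according to where $r$ falls among the cylinder scales $|I_n|$ and the intermediate scales at levels $n_k+1$, and applying the mass distribution principle, I would conclude $\mathcal H^s(F) > 0$ whenever $\sum_t t(1/(t^2\Psi(t)))^s = \infty$. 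A standard disjoint-copies argument (running the construction inside arbitrary subintervals of $[0,1]$) then boosts this to $\mathcal H^s(D(\psi)^c) = \infty$. The principal technical obstacle is the ball-mass estimate across the disparate scales introduced by the ``free digits + forced large partial quotient'' hybrid construction, which must be carefully calibrated against the divergence rate of the series in (\ref{hdsum}).
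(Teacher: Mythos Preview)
Your continued-fraction reduction and the convergence argument match the paper's approach almost exactly. The paper covers $G(\Psi)$ by the same intervals $J_n(a_1,\dots,a_n)$ (your $F_n$), bounds $|J_n|\le 1/(q_{n-1}q_n\Psi(q_n))$, and then reindexes by the pair $(q_{n-1},q_n)=(p,q)$ using that $(a_1,\dots,a_n)\mapsto(q_{n-1},q_n)$ is two-to-one onto coprime pairs; this yields $\sum_q\sum_{p\le q}(pq\Psi(q))^{-s}$, and the inner sum $\sum_{p\le q}p^{-s}\asymp q^{1-s}$ for $s<1$ recovers the series in \eqref{hdsum}. Your indexing by $(t,a)=(q_n,a_n)$ is the same computation in disguise, since $q_{n-1}\asymp t/a$; one small caveat is that your stated count $\le Ct/a^2$ is not literally correct for $a>\sqrt{t}$ (the interval $(t/(a{+}1),t/a]$ then has length $<1$ but may still contain one integer), though this only contributes a lower-order term and is cleanly avoided by the paper's $(p,q)$ bookkeeping. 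Also, the reduction is not ``modulo a null set'' but a two-sided sandwich $G(\Psi)\subset D(\psi)^c\subset G(\Psi/4)$, which suffices because the series with $\Psi$ and $\Psi/4$ converge or diverge together.

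Your divergence argument, however, takes a genuinely different and much harder route than the paper's. The paper builds no Cantor set at all: it simply observes the chain of inclusions
\[
D(\psi)^c \supset G(\Psi)\supset\{x: a_{n+1}(x)>\Psi(q_n)\ \text{i.m.}\}\supset \mathcal K(3\Psi),
\]
the last step via Legendre's theorem and (P$_5$), and then invokes Jarn\'ik's theorem (in its Mass Transference Principle form) to get $\mathcal H^s\big(\mathcal K(3\Psi)\big)=\infty$ under the divergence hypothesis. That finishes the divergence case in a few lines. Your Cantor-plus-mass-distribution plan would in effect reprove Jarn\'ik's theorem from scratch in this setting; it can be made to work, but the calibration of $(n_k)$ to the divergence rate that you flag as the ``principal technical obstacle'' is exactly the delicate part of Jarn\'ik's original argument, and the paper's shortcut buys you that entire analysis for free.
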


Consequently, the Hausdorff dimension of the set $D(\psi)^c$ is given by
$$
\hdim D(\psi)^c=\frac{2}{2+\tau}, \ {\text{where}}\
\tau=\liminf_{t\to \infty}\frac{\log \Psi(t)}
{\log t}.
$$
As an example, $$
\hdim D(\psi)^c=\frac{2}{2+\tau},\    \ {\text{for}}\ \psi(t)=\frac{1-at^{-\tau}}{t}  \ (a>0, \tau>0).
$$

\begin{rem}\label{rem1}{\rm Here we remark that the condition $s<1$ is necessary.  $\H^1$ is the Lebesgue measure, which is the scope of Theorem \ref{KW}. The summability criterion that appears there does not agree with the one in 
Theorem \ref{dicor}:  indeed, when $s=1$,
the summand in \eqref{hdsum}
differs from that in \eqref{kwint} by a factor of $\log \Psi(t)$. This factor is not superfluous, as can be seen by taking $$\Psi(t)
= \log t\, (\log\log t)^2.$$
}\end{rem}

A natural generalization of the $s$-dimensional Hausdorff measure $\mathcal H^s$ is the $f$-dimensional Hausdorff measure $\mathcal H^f$ where $f$ is a {\sl dimension function}, that is an increasing, continuous function $f:\mathbb R_+\to \mathbb R_+$ such that $f(r)\to 0$ as $r\to 0$.
{We need to impose an additional technical condition on $f$: say that a dimension function $f$ is {\sl essentially sub-linear} if
\begin{equation}\label{ffm10}
\text{there exists }B>1\text{ such that } \limsup_{x\to 0}\frac{f(Bx)}{f(x)}<B.
\end{equation}
The above condition does not hold for $f(x) = x$ or $f(x)= x \log (1/x)$. However it is clearly  satisfied for the dimension functions $f(x)=x^s$ when $0\le s<1$.  Further, we remark that the essentially sub-linear condition is equivalent to the doubling condition but with  exponent $\alpha<1$.
  (A function $f$ is called {\sl doubling with exponent $\alpha$} if $f(cx)\ll c^\alpha f(x)$ for all  $x$ and all $c>1$.)

The following theorem   readily implies Theorem \ref{dicor}:}

\begin{thm}\label{dtthm} Let $\psi$ be a non-increasing positive function with $t\psi(t)<1$ for all large $t$, and let $f$ be {an essentially sub-linear} dimension function.
 Then
$$ \H^f\big(D(\psi)^c\big)=\begin {cases}
 0 \ & {\rm if } \quad \sum\limits_{t} {t}f\left(\frac{1}{{t^2\Psi({t})}} \right) \, < \, \infty; \\[2ex]
 \infty \ & {\rm if } \quad \sum\limits_{t} {t}f\left(\frac{1}{{t^2\Psi({t})}} \right)  \, = \, \infty.
\end {cases}$$
\end{thm}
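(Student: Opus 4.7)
The plan is to translate the Dirichlet non-improvability condition into a statement about partial quotients in the continued fraction expansion of $x$, and then to analyze the resulting limsup set by classical methods for Hausdorff measures of limsup sets, adapted to a general essentially sub-linear dimension function $f$.

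First, I would invoke the continued-fraction characterization underlying \cite{KlWad16}: $x\in D(\psi)^c$ if and only if infinitely often along the convergents $p_n/q_n$ of $x$ one has (up to absolute constants) $a_{n+1}(x)\ge \Psi(q_n)/q_n$, equivalently $q_n q_{n+1}\ge \Psi(q_n)$. This exhibits $D(\psi)^c$ as a limsup set
$$D(\psi)^c \;=\; \bigcap_{N\ge 1}\bigcup_{n\ge N} E_n,$$
where each $E_n$ is a union of rank-$(n{+}1)$ cylinders in the Gauss coding, namely those cylinders on which the prescribed lower bound for $a_{n+1}$ is met. The two directions of the theorem now become a convergence/divergence statement about this limsup set.

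For the convergence direction, cover each $E_n$ by its natural sub-cylinders $I_{n+1}(a_1,\dots,a_{n+1})$, each of length of order $1/(q_n q_{n+1})\asymp 1/(q_n^2\Psi(q_n))$. Grouping cylinders dyadically according to the size of $q_n$, with $q_n\in[2^k,2^{k+1})$ contributing at most $O(2^{2k})$ cylinders, the total $f$-measure of the cover is comparable to $\sum_t t\,f(1/(t^2\Psi(t)))$; here \eqref{ffm10} is what permits one to pass from the $q_n$-indexed sum to the integer-indexed sum in the statement of the theorem without losing summability. Applying the Hausdorff--Cantelli lemma then yields $\H^f(D(\psi)^c)=0$ as soon as this series converges.

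For the divergence direction, where the substantive work lies, I would construct a Cantor-type subset $K\subseteq D(\psi)^c$ carrying a probability measure $\mu$ satisfying a mass-distribution bound $\mu(B)\ll f(|B|)$ for every small ball $B$. Concretely, choose a sparse sequence of levels $n_k$ and specify at each such level a narrow window of admissible partial quotients $a_{n_k+1}\asymp \Psi(q_{n_k})/q_{n_k}$, while letting the intervening partial quotients range over a bounded alphabet so that the denominators $q_n$ grow at a controlled geometric rate. Define $\mu$ by distributing mass uniformly over the admissible cylinders at each level. Verifying the mass-distribution inequality reduces to comparing a ball $B(x,r)$ to the basic cylinders of the construction; the essential sub-linearity \eqref{ffm10} is used to absorb the doubling constants that appear when $r$ falls between two consecutive cylinder scales. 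The mass-distribution principle then delivers $\H^f(K)=\infty$.

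The main obstacle will be this mass-distribution estimate in the divergence case for a general dimension function $f$: unlike for a pure power $f(r)=r^s$, the ratio $f(\lambda r)/f(r)$ is not deterministic, so the spacing $n_{k+1}-n_k$ of the Cantor construction and the width of the admissible window for $a_{n_k+1}$ must be calibrated so that the cylinder scales produced by the construction align geometrically, allowing \eqref{ffm10} to be applied only a controlled number of times when estimating $\mu$ on arbitrary balls. Once this calibration is in place, the divergence of $\sum_t t f(1/(t^2\Psi(t)))$ ensures that the normalized measure $\mu$ carries enough mass that $\H^f(K)$ cannot be finite, completing the proof; Theorem~\ref{dicor} follows immediately by specializing to $f(r)=r^s$ with $s<1$.
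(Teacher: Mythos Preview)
Your continued-fraction characterization is wrong, and the error feeds into both halves. The correct statement (Lemma~\ref{kwlem}, yielding the inclusions \eqref{l1.2}) is that, up to a constant factor, $x\in D(\psi)^c$ iff $a_n(x)a_{n+1}(x)>\Psi(q_n)$ for infinitely many $n$---\emph{not} $a_{n+1}\ge\Psi(q_n)/q_n$ or $q_nq_{n+1}\ge\Psi(q_n)$. Consequently the natural cover at stage $n$ is $J_n(a_1,\dots,a_n)=\bigcup_{a_{n+1}>\Psi(q_n)/a_n}I_{n+1}$, whose diameter is at most $1/\bigl(q_{n-1}q_n\Psi(q_n)\bigr)$, not $1/\bigl(q_n^2\Psi(q_n)\bigr)$. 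This appearance of $q_{n-1}$ is the whole point of the convergence half: after reparametrizing cylinders by $(p,q):=(q_{n-1},q_n)$ (a $2$-to-$1$ map onto coprime pairs $1\le p\le q$), the covering sum becomes $\sum_q\sum_{1\le p\le q}f\bigl(1/(pq\Psi(q))\bigr)$, and the essentially sub-linear hypothesis \eqref{ffm10} is exactly what is needed to show the inner sum is $\ll q\,f\bigl(1/(q^2\Psi(q))\bigr)$ (Proposition~\ref{p1}). Your dyadic bookkeeping in $q_n$ alone, with ``$O(2^{2k})$ cylinders each of length $\asymp 1/(q_n^2\Psi(q_n))$'', sidesteps this; indeed with that diameter estimate the argument would go through for $f(x)=x$ as well, contradicting Remark~\ref{rem1}.

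For the divergence half, the paper does not build a Cantor set. It observes $G(\Psi)\supset\{x:a_{n+1}(x)>\Psi(q_n)\text{ i.m.}\}\supset\mathcal K(3\Psi)$ and invokes Jarn\'ik's theorem (Theorem~\ref{Jarnik}), after checking that an essentially sub-linear $f$ satisfies \eqref{f2infty}--\eqref{f2}. A direct mass-distribution construction is possible in principle, but your proposed admissible window $a_{n_k+1}\asymp\Psi(q_{n_k})/q_{n_k}$ is again based on the wrong characterization and would not place the resulting Cantor set inside $D(\psi)^c$; one needs $a_{n_k+1}\gtrsim\Psi(q_{n_k})$ (or at least $a_{n_k}a_{n_k+1}\gtrsim\Psi(q_{n_k})$).
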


\subsection{Continued fractions and improving Dirichlet's theorem
}
\
The starting point for the work of Davenport \& Schmidt \cite{DaSc70} and Kleinbock \& Wadleigh \cite{KlWad16} is an observation that Dirichlet improvability is equivalent to a condition on the growth rate of partial quotients. Let's recall this connection.

Every $x\in [0,1)$ has a  {\sl continued fraction expansion}, $$x
=\frac{1}{a_1(x)+\displaystyle{\frac{1}{a_2(x)+\displaystyle{\frac{1}{a_3(x)+\ddots}}}}}$$ where $a_1,a_2,\dots$ are positive integers called the {\sl partial quotients} of $x$. We write $x =[a_1(x),a_2(x),\dots]$ for short.   We also write $p_n/q_n = [a_1,...,a_n]$ ($p_n, q_n$ coprime) for the $n$'th  {\sl convergent} of $x$. The results of  \cite{DaSc70, KlWad16} rely crucially on the following observations,  proved
in  Lemma \ref{blah} below: \begin{align*}
x\in D(\psi) &\Longleftrightarrow |q_{n-1}x-p_{n-1}| \,<\, \psi(q_n)  \ {\text{for all}}\ n\gg 1 \\
&\Longleftrightarrow [a_{n+1}, a_{n+2},\dots]\cdot [a_n, a_{n-1},\dots, a_1]\, < \, \frac1{\Psi(q_n)}
  \ {\text{for all}}\ n\gg 1.\end{align*}
This leads to the following criteria for Dirichlet improvability.

\begin{lem}[\cite{KlWad16}, Lemma 2.2)]\label{kwlem}Let $x\in [0, 1)\smallsetminus\Q$, and let $\psi:[t_0, \infty)\to\R_+$ be non-increasing. Then
\begin{itemize}
\item [{\rm (i)}] $x$ is $\psi$-Dirichlet improvable 
 if $a_{n+1}(x)a_n(x)\, \le\,\Psi(q_n)/4$ for all sufficiently large $n$.
\item [{\rm (ii)}]$x$ is $\psi$-Dirichlet non-improvable
 if $a_{n+1}(x)a_n(x)\, >\, \Psi(q_n)$ for infinitely many~$n$.
\end{itemize}

\end{lem}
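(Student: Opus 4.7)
The plan is to read off both parts from the equivalence recorded in the preceding display (the content of Lemma~\ref{blah}): namely, $x\in D(\psi)$ precisely when
\[
\gamma_n \,:=\, [a_{n+1}, a_{n+2},\dots]\cdot [a_n, a_{n-1},\dots, a_1] \,>\, \frac{1}{\Psi(q_n)}
\]
for all sufficiently large $n$. (This follows by substituting the identity $|q_{n-1}x-p_{n-1}|=1/\bigl(q_n+q_{n-1}[a_{n+1},a_{n+2},\dots]\bigr)$ into $|q_{n-1}x-p_{n-1}|<\psi(q_n)$ and rearranging using $1/(t\psi(t))-1=1/\Psi(t)$.) The task therefore reduces to estimating $\gamma_n$ in terms of $a_n(x)\,a_{n+1}(x)$.

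Each factor of $\gamma_n$ is itself a continued fraction in $(0,1)$ with positive integer partial quotients, so the elementary bounds $1/(a_k+1)<[a_k,a_{k+1},\dots]<1/a_k$ (and the analogous ones for the reverse tail $[a_n,\dots,a_1]$, valid once $n\ge 2$) immediately yield
\[
\frac{1}{(a_n+1)(a_{n+1}+1)} \,<\, \gamma_n\,<\, \frac{1}{a_n a_{n+1}}.
\]
Because each $a_k\ge 1$ gives $a_k+1\le 2a_k$, the left-hand side is in turn at least $1/(4\, a_n a_{n+1})$; this is the only nontrivial estimate needed.

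For part~(i), the hypothesis $a_n(x)\,a_{n+1}(x)\le \Psi(q_n)/4$ for all large $n$ gives $(a_n+1)(a_{n+1}+1)\le 4 a_n a_{n+1}\le \Psi(q_n)$, whence $\gamma_n>1/\Psi(q_n)$ for every such $n$, so $x\in D(\psi)$. For part~(ii), $a_n(x)\,a_{n+1}(x)>\Psi(q_n)$ for infinitely many $n$ yields $\gamma_n<1/(a_n a_{n+1})<1/\Psi(q_n)$ along the same subsequence, so the criterion for $D(\psi)$ fails infinitely often and $x\notin D(\psi)$. No real obstacle arises once the equivalence is in hand; the only mildly delicate feature is the asymmetric factor of $4$ between (i) and (ii), which is forced by the worst-case slack (at $a_k=1$) between $1/(a_k+1)$ and $1/a_k$ entering once for each of the two factors of $\gamma_n$.
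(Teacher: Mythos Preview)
Your proof is correct and follows precisely the route the paper indicates: the paper does not prove Lemma~\ref{kwlem} directly (it is quoted from \cite{KlWad16}), but derives the equivalence in Lemma~\ref{blah} and remarks that Lemma~\ref{kwlem} follows from it; your argument supplies exactly those missing elementary estimates on $\gamma_n=[a_{n+1},a_{n+2},\dots]\cdot[a_n,\dots,a_1]$ via $1/(a_k+1)\le [a_k,\dots]\le 1/a_k$. One tiny quibble: for the finite reverse tail the lower bound $1/(a_n+1)\le[a_n,\dots,a_1]$ can be an equality (e.g.\ $n=2$, $a_1=1$), but this is harmless since the infinite-tail factor gives a strict inequality, so $\gamma_n>1/\Psi(q_n)$ still holds in part~(i).
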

 As a consequence of this lemma, we have inclusions
\begin{equation}\label{l1.2}
G(\Psi) \subset D(\psi)^c\subset G(\Psi/4),\end{equation}
where
\begin{equation*}\label{gpsi}
G(\Psi):=\Big\{x\in [0,1): a_n(x)a_{n+1}(x)\,>\, \Psi\big(q_n(x)\big)  \ {\text{ for i.m.}}\ n\in \N\Big\}.
\end{equation*}

Our next theorem characterizes the $f$-dimensional Hausdorff measure of  sets $G(\Psi)$:

\begin{thm}\label{thm} Let $\Psi: [t_0, \infty)\to \R_+$ be a non-decreasing function and let $f$ be {an essentially sub-linear} dimension function.

Then
$$ \H^f\big(G(\Psi)\big)=\begin {cases}
 0 \ & {\rm if } \quad \sum\limits_{t} {t}f\left(\frac{1}{{t^2\Psi({t})}} \right) \, < \, \infty; \\[2ex]
 \infty \ & {\rm if } \quad \sum\limits_{t} {t}f\left(\frac{1}{{t^2\Psi({t})}} \right)  \, = \, \infty.
\end {cases}$$

\end{thm}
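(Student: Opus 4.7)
The plan is to establish the dichotomy by treating convergence and divergence separately, starting from the natural limsup representation $G(\Psi) = \bigcap_{N \ge 1} \bigcup_{n \ge N} E_n$, where $E_n := \{x \in [0,1) : a_n(x) a_{n+1}(x) > \Psi(q_n(x))\}$. Each $E_n$ is a disjoint union, over cylinders $I_n(a_1,\ldots,a_n)$ of length $\asymp 1/q_n^2$, of sub-cylinders $I_{n+1}(a_1,\ldots,a_n,a_{n+1})$ with $a_{n+1} > \Psi(q_n)/a_n$, and these sub-cylinders accumulate at one end of $I_n$.

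For the convergence direction I apply a Hausdorff--Cantelli covering. Within each $I_n$, the sub-cylinders making up $E_n$ fit inside a single interval $J_n$ of diameter $\asymp \min\{1/q_n^2, \, a_n/(q_n^2 \Psi(q_n))\}$. Cylinders with $a_n > \Psi(q_n)$ are already entirely contained in $E_{n-1}$, since $a_{n-1} a_n \ge a_n > \Psi(q_n) \ge \Psi(q_{n-1})$ by monotonicity of $\Psi$, so these can be absorbed at the previous level. For the remaining cylinders, $a_n \le \Psi(q_n)$ and the covering element has diameter $\asymp a_n/(q_n^2 \Psi(q_n))$. Invoking the essentially sub-linear condition \eqref{ffm10}, i.e.\ doubling with some exponent $\alpha < 1$, gives $f\bigl(a_n/(q_n^2 \Psi(q_n))\bigr) \ll a_n^\alpha f\bigl(1/(q_n^2 \Psi(q_n))\bigr)$. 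Grouping cylinders by their denominator $q_n = q$, the arithmetic fact that $\#\{(a_1,\ldots,a_n) : q_n = q,\, a_n = a\} \le q/(a(a+1))$ (from the coprimality condition $\gcd(q_{n-1}, q) = 1$ with $q_{n-1} \in (q/(a+1), q/a]$) combined with $\sum_{a \ge 1} a^{\alpha-2} < \infty$ reduces the total bound to a constant times $\sum_{q} q f\bigl(1/(q^2 \Psi(q))\bigr)$, which converges by hypothesis, so $\H^f(G(\Psi)) = 0$.

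For the divergence direction I will apply the mass distribution principle, building a Cantor subset $E \subset G(\Psi)$ and a probability measure $\mu$ supported on $E$ with $\mu(B) \ll f(|B|)$ for all small balls $B$. The construction fixes a finite range $\{1,\ldots,M\}$ for the ``free'' partial quotients and picks a rapidly increasing sequence of ``special'' indices $n_1 < n_2 < \cdots$ at which the values $a_{n_k}$ and $a_{n_k+1}$ are prescribed so that $a_{n_k} a_{n_k+1} > \Psi(q_{n_k})$; the divergence of $\sum_q q f\bigl(1/(q^2 \Psi(q))\bigr)$ supplies enough denominators $q_{n_k}$ for which this imposed condition contributes the requisite mass. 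The measure $\mu$ is then defined inductively by distributing mass evenly among surviving sub-cylinders at each level, and the ball-estimate $\mu(B) \ll f(|B|)$ is verified by a case analysis on the position of $|B|$ between consecutive cylinder scales. The mass distribution principle then yields $\H^f(E) > 0$, and replicating the construction inside arbitrarily small initial cylinders produces disjoint copies that force $\H^f(G(\Psi)) = \infty$.

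The main obstacle is the ball-estimate in the divergence part: the Cantor construction must be calibrated so that the ``special'' levels contribute exactly the right amount of $\mu$-mass (as dictated by the divergence series) while the ``free'' levels preserve doubling-exponent regularity of $\mu$ across intermediate scales. The essentially sub-linear hypothesis on $f$ is precisely what makes this interpolation possible between the natural cylinder scales, and Remark \ref{rem1} shows that the theorem genuinely fails without it.
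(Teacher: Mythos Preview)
Your convergence argument follows the paper's route: cover each $E_n\cap I_n(a_1,\ldots,a_n)$ by a single interval $J_n$ of diameter $\asymp a_n/\big(q_n^2\Psi(q_n)\big)\asymp 1/\big(q_{n-1}q_n\Psi(q_n)\big)$, then sum $f(|J_n|)$ over all cylinders. The only difference is bookkeeping. The paper parametrizes cylinders by $(p,q)=(q_{n-1},q_n)$, noting that the map $(a_1,\ldots,a_n)\mapsto(q_{n-1},q_n)$ is two-to-one onto coprime pairs, which gives $\sum_q\sum_{1\le p\le q}f\big(1/(pq\Psi(q))\big)$; a $B$-adic decomposition of $p$ together with the essentially sub-linear bound then shows the inner sum is $\ll q\,f\big(1/(q^2\Psi(q))\big)$. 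Your parametrization by $(q,a_n)$ is equivalent under $a_n\asymp q/p$, but be careful: the stated count $\#\{(a_1,\ldots,a_n):q_n=q,\ a_n=a\}\le q/\big(a(a+1)\big)$ is missing a factor of $2$ and an additive $+O(1)$, and summing that $+O(1)$ weighted by $a^\alpha$ over $a\le\Psi(q)$ is not obviously $O(q)$. The clean fix is exactly the paper's: sum over $p$, where each $p$ contributes at most two cylinders with weight $\asymp(q/p)^\alpha$, and $\sum_{p\le q}(q/p)^\alpha\asymp q$ since $\alpha<1$. Your absorption of cylinders with $a_n>\Psi(q_n)$ into $E_{n-1}$ is correct and a pleasant simplification the paper does not use.

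Your divergence argument, by contrast, takes a much harder road. The paper dispatches this case in a few lines via the inclusion
\[
\mathcal{K}(3\Psi)\ \subset\ \big\{x:a_{n+1}(x)>\Psi(q_n)\ \text{for i.m.\ }n\big\}\ \subset\ G(\Psi),
\]
the first containment coming from Legendre's theorem \eqref{Legendre} and $(P_5)$. It then checks that an essentially sub-linear $f$ satisfies the hypotheses \eqref{f2infty} and \eqref{f2} of Jarn\'{\i}k's theorem (Theorem~\ref{Jarnik}) and concludes $\mathcal{H}^f\big(G(\Psi)\big)\ge\mathcal{H}^f\big(\mathcal{K}(3\Psi)\big)=\infty$. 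Your Cantor-plus-mass-distribution plan would in effect re-prove Jarn\'{\i}k inside $G(\Psi)$ from scratch; the ball estimate you rightly flag as ``the main obstacle'' is genuine and is not resolved in your sketch. So while the strategy is sound in principle, as written it is both considerably less efficient than the paper's reduction and incomplete.
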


In view of the inclusion \eqref{l1.2},  Theorem \ref{thm} readily implies Theorem \ref{dtthm}.

\medskip

The structure of the paper is as follows.  In the next section  (\S \ref{S2}) we group together some basic definitions and concepts to which we will {appeal in} proving Theorem~\ref{thm}. The proof of Theorem \ref{thm} naturally splits into two parts, the divergence case and the convergence case, which we address in sections \S \ref{S3} and \S 4 respectively.  We conclude the paper with some related open questions.

\medskip

\noindent{\bf Acknowledgements.} M. Hussain would like to thank the members of the Department of Mathematics at Brandeis University for their hospitality. We would like to thank David Simmons   for some very useful comments on an earlier draft of this paper.

\section{Preliminaries and auxiliary results}\label{S2}

For completeness we give a very brief introduction to Hausdorff measures and dimension.  For further details we refer to the beautiful texts \cite{BeDo_99, F_14}.

\subsection{Hausdorff measure and dimension}\label{HM}\

Let $f$ be a dimension function and let
$E\subset \R^n$.
 Then, for any $\rho>0$ a countable collection $\{B_i\}$ of balls in
$\R^n$ with diameters $\mathrm{diam} (B_i)\le \rho$ such that
$E\subset \bigcup_i B_i$ is called a $\rho$-cover of $E$.
Let
\[
\H_\rho^f(E)=\inf \sum_i f\big(\mathrm{diam}(B_i)\big),
\]
where the infimum is taken over all possible $\rho$-covers $\{B_i\}$ of $E$. It is easy to see that $\H_\rho^f(E)$ increases as $\rho$ decreases and so approaches a limit as $\rho \rightarrow 0$. This limit could be zero or infinity, or take a finite positive value. Accordingly, the \textit{Hausdorff $s$-measure $\H^f$} of $E$ is defined to be
\[
\H^f(E)=\lim_{\rho\to 0}\H_\rho^f(E).
\]

It is easily verified that Hausdorff measure is monotonic and countably sub-additive, and that $\H^f(\varnothing)=0$. Thus it is an outer measure on $\R^n$.

In the case when $f(x)=x^s$ for some $s\geq 0$, we write $\mathcal H^s$ for $\mathcal H^f$. Furthermore, for any subset $E$ one can verify that there exists a unique critical value of $s$ at which $\H^s(E)$ `jumps' from infinity to zero. The value taken by $s$ at this discontinuity is referred to as the \textit{Hausdorff dimension of  $E$} and  is denoted by $\hdim E $; i.e.,
\[
\hdim E :=\inf\{s\ge 0:\; \H^s(E)=0\}.
\] When $s=n$,  $\H^n$ coincides with standard Lebesgue measure on $\R^n$.

Computing Hausdorff dimension of a set is typically accomplished in two steps: obtaining the upper and lower bounds separately.

Upper bounds often can be handled by finding appropriate coverings. When dealing with a limsup set, one 
 {usually applies} the Hausdorff measure version of the famous Borel--Cantelli lemma (see Lemma 3.10 of \cite{BeDo_99}):

\begin{pro}\label{bclem}
    Let $\{B_i\}_{i\ge 1}$ be a sequence of measurable  sets in $\R^n$ and suppose that for some dimension function $f$,  $$\sum_i f\big(\mathrm{diam}(B_i)\big) \, < \, \infty.$$ Then  $$\H^f(
    {\limsup_{i\to\infty}B_i})=0.$$
\end{pro}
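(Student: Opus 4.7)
The statement is the classical Hausdorff--Cantelli lemma, and the argument is short: it only uses the definition of $\H^f_\rho$ together with the convergence of the series. My plan is to exhibit the tail of the sequence $\{B_i\}$ as a small-diameter cover of the limsup set and to bound $\H^f_\rho$ by the tail of the convergent series.

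First I would note that, by definition,
\[
\limsup_{i\to\infty} B_i \;=\; \bigcap_{N\ge 1} \bigcup_{i\ge N} B_i,
\]
so for every $N$ the family $\{B_i\}_{i\ge N}$ is a cover of $\limsup_i B_i$. To turn this into a $\rho$-cover I need the diameters to be small eventually. Since $\sum_i f(\mathrm{diam}(B_i)) < \infty$ we have $f(\mathrm{diam}(B_i)) \to 0$; combined with the fact that $f$ is increasing and continuous with $f(r)\to 0$ only as $r\to 0$, this forces $\mathrm{diam}(B_i)\to 0$. (If instead $\mathrm{diam}(B_i)\ge \delta$ along a subsequence for some $\delta>0$, the values $f(\mathrm{diam}(B_i))$ would be bounded below by $f(\delta)>0$, contradicting the convergence of the series. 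If one is uncomfortable with asserting $f(\delta)>0$ for a general dimension function, enclose each $B_i$ in a ball of the same center with diameter $\max\{\mathrm{diam}(B_i), 2^{-i}\}$; this preserves convergence and forces diameters to zero.)

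Next, given $\rho > 0$, pick $N=N(\rho)$ large enough that $\mathrm{diam}(B_i) \le \rho$ for every $i\ge N$. Then $\{B_i\}_{i\ge N}$ is a legitimate $\rho$-cover of $\limsup_i B_i$, and by definition of $\H^f_\rho$ as an infimum over such covers,
\[
\H^f_\rho\Bigl(\limsup_{i\to\infty} B_i\Bigr) \;\le\; \sum_{i\ge N} f(\mathrm{diam}(B_i)).
\]
As $\rho\to 0$ we must take $N(\rho)\to\infty$, and the right-hand side, being the tail of a convergent series, tends to $0$. Passing to the limit $\rho\to 0$ then gives $\H^f(\limsup_i B_i)=0$, which is the claim.

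There is essentially no obstacle in this argument: the only subtle point is the passage from convergence of $\sum f(\mathrm{diam}(B_i))$ to $\mathrm{diam}(B_i)\to 0$, which relies mildly on the structure of a dimension function. Everything else is a direct unpacking of the definitions of $\H^f_\rho$ and $\limsup$. Thus the proof can be presented in a few lines.
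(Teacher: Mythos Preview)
Your argument is the standard proof of the Hausdorff--Cantelli lemma and is correct. The paper itself does not supply a proof of this proposition; it merely quotes the result with a reference to Lemma~3.10 of Bernik--Dodson, so there is nothing to compare beyond noting that your tail-cover argument is exactly the classical one.

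One cosmetic point worth flagging: the paper's definition of a $\rho$-cover in \S\ref{HM} is phrased in terms of \emph{balls}, whereas the proposition allows the $B_i$ to be arbitrary measurable sets, and your proof uses the $B_i$ themselves as the cover. This is a harmless mismatch (and is really an inconsistency internal to the paper, not to your proof): one can either adopt the equivalent definition of $\H^f$ via covers by arbitrary sets, or enclose each $B_i$ in a ball of comparable diameter. Either way the conclusion stands.
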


\subsection{Continued fractions and Diophantine approximation}
 \

  Define the Gauss transformation $T:[0,1)\to [0,1)$  by
\[T(0):=0, \quad T(x):=\frac1x\ {\rm(mod}\ 1),\quad {\rm for} \ x\in (0, 1).\]
For $x \in [0,1)\smallsetminus \Q$ with continued fraction expansion $x= [a_1, a_2,\dots]$, as in section 1.2, we have $a_n(x)=\lfloor 1/T^{n-1}(x)\rfloor$ for each $n\ge 1.$ Recall the sequences $p_n= p_n(x)$, $q_n= q_n(x)$ also discussed in section 1.2. With the conventions
$p_{-1}=1, ~q_{-1}=0, ~p_0=0$ and  $~q_0=1$,  these sequences can be generated by the following recursive relations \cite{Khi_63}
\begin {equation}\label{recu}
p_{n+1}=a_{n+1}(x)p_n+p_{n-1}, \ \
q_{n+1}=a_{n+1}(x)q_n+q_{n-1},\ \  n\geq 0.
\end {equation}

Thus $p_n=p_n(x), q_n=q_n(x)$ are determined by the partial quotients $a_1,\dots,a_n$, so we may write $p_n=p_n(a_1,\dots, a_n), q_n=q_n(a_1,\dots,a_n)$. When it is clear which partial quotients are involved, we denote them by $p_n, q_n$ for simplicity.

For any integer vector $(a_1,\dots,a_n)\in \N^n$ with $n\geq 1$, write
\begin{equation}\label{cyl}
I_n(a_1,\dots,a_n):=\left\{x\in [0, 1): a_1(x)=a_1, \dots, a_n(x)=a_n\right\}
\end{equation}
for the corresponding `cylinder of order $n$', i.e.\  the set of all real numbers in $[0,1)$ whose continued fraction expansions begin with $(a_1, \dots, a_n).$

We will frequently use the following well known properties of continued fraction expansions.  They are explained in the standard texts \cite{IosKra_02, Khi_63, Khi_64}.

\begin{pro}\label{pp3} For any {positive} integers $a_1,\dots,a_n$, let $p_n=p_n(a_1,\dots,a_n)$ and $q_n=q_n(a_1,\dots,a_n)$ be defined recursively by \eqref{recu}. {Then:}
\begin{enumerate}[label={\rm (\subscript{\rm P}{\arabic*})}]
\item
\begin{eqnarray*}
I_n(a_1,a_2,\dots,a_n)= \left\{
\begin{array}{ll}
         \left[\frac{p_n}{q_n}, \frac{p_n+p_{n-1}}{q_n+q_{n-1}}\right)     & {\rm if }\ \
         n\ {\rm{is\ even}};\\
         \left(\frac{p_n+p_{n-1}}{q_n+q_{n-1}}, \frac{p_n}{q_n}\right]     & {\rm if }\ \
         n\ {\rm{is\ odd}}.
\end{array}
        \right.
\end{eqnarray*}
{\rm Thus, its length is given by} \begin{equation}\label{lencyl}
\frac{1}{2q_n^2}\leq |I_n(a_1,\ldots,a_n)|=\frac{1}{q_n(q_n+q_{n-1})}\leq \frac1{q_n^2},
\end{equation}
{\rm since} $$
 p_{n-1}q_n-p_nq_{n-1}=(-1)^n, \ {\rm for \ all }\ n\ge 1.
 $$

\item For any $n\geq 1$, $q_n\geq 2^{(n-1)/2}$.

\item $$\frac{q_{n-1}}{q_n}=[a_n, a_{n-1},
\dots, a_1].$$

\item $$
\big|q_{n-1}(x)x-p_{n-1}(x)\big|=\frac{1}{q_n(x)+T^n(x)\cdot q_{n-1}(x)}=\frac{1}{q_n(x)(1+T^n(x)  \cdot\frac{q_{n-1}(x)}{q_n(x)})},
$$

\item \begin{equation*}\label{p7}
\frac{1}{3a_{n+1}(x)q^2_n(x)}\, <\, \Big|x-\frac{p_n(x)}{q_n(x)}\Big|=\frac{1}{q_n(x)(q_{n+1}(x)+T^{n+1}(x)  q_n(x))}\, < \,\frac{1}{a_{n+1}q^2_n(x)}.
\end{equation*}
\end{enumerate}
\end{pro}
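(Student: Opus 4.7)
The plan is to derive all five items from the recurrence \eqref{recu} plus two auxiliary facts: the determinant identity
\[
p_{n-1} q_n - p_n q_{n-1} = (-1)^n \qquad (n \geq 1),
\]
and the Möbius parametrization
\[
x = \frac{p_n + T^n(x)\, p_{n-1}}{q_n + T^n(x)\, q_{n-1}} \qquad (x \in [0,1) \setminus \mathbb{Q}).
\]
Both follow by a straightforward induction on $n$: the determinant identity starts from $p_0 q_1 - p_1 q_0 = -a_1$ and uses \eqref{recu} to flip the sign at each step, while the parametrization uses the definition $T^n(x) = 1/T^{n-1}(x) - a_n$ (valid for irrational $x$) together with \eqref{recu} to pass from $n$ to $n+1$.

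Granting these two identities, (P4) is an immediate computation: expanding $q_{n-1} x - p_{n-1}$ via the parametrization and invoking the determinant identity yields
\[
q_{n-1} x - p_{n-1} = \frac{(-1)^{n+1}}{q_n + T^n(x)\, q_{n-1}},
\]
whose absolute value is the stated formula. The analogous computation for $q_n x - p_n$, combined with the identity $(a_{n+1} + T^{n+1}(x)) q_n + q_{n-1} = q_{n+1} + T^{n+1}(x)\, q_n$ coming from \eqref{recu}, gives
\[
\Bigl| x - \frac{p_n}{q_n}\Bigr| = \frac{1}{q_n \bigl(q_{n+1} + T^{n+1}(x)\, q_n\bigr)}.
\]
The upper bound in (P5) then follows from $q_{n+1} \geq a_{n+1} q_n$, and the lower bound from $q_{n+1} + T^{n+1}(x)\, q_n \leq q_{n+1} + q_n \leq 3 a_{n+1} q_n$ (using $q_{n-1} \leq q_n \leq a_{n+1} q_n$). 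Item (P2) is a separate direct induction from $q_{n+1} \geq q_n + q_{n-1} \geq 2 q_{n-1}$, and (P3) follows by dividing the recurrence $q_n = a_n q_{n-1} + q_{n-2}$ by $q_{n-1}$ and iterating, folding $q_{n-2}/q_{n-1}$ into the tail of a continued fraction at each step.

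For (P1), I would study the Möbius map $\varphi_n(y) = (p_n + y p_{n-1})/(q_n + y q_{n-1})$. Its derivative equals $(-1)^n/(q_n + y q_{n-1})^2$ by the determinant identity, so $\varphi_n$ is monotone, increasing for even $n$ and decreasing for odd $n$. The cylinder $I_n(a_1,\dots,a_n)$ is precisely $\varphi_n([0,1))$, because $x \in I_n$ iff the first $n$ partial quotients of $x$ are $a_1,\dots,a_n$ and $T^n(x) \in [0,1)$, and applying the parametrization realizes every such $x$ uniquely as $\varphi_n(T^n(x))$. The endpoints $\varphi_n(0) = p_n/q_n$ and $\varphi_n(1) = (p_n + p_{n-1})/(q_n + q_{n-1})$, together with the parity of $n$, then give the two endpoint formulae in (P1). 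The length formula \eqref{lencyl} follows from $|\varphi_n(1) - \varphi_n(0)| = 1/(q_n(q_n + q_{n-1}))$ via the determinant identity, with the sandwiching bounds coming from $0 \leq q_{n-1} \leq q_n$. The main obstacle is organizational rather than technical: keeping the parity bookkeeping in (P1) consistent and spotting that the determinant identity and the Möbius parametrization are the two levers driving (P1), (P4), and (P5), so the proof does not fragment into five unrelated calculations.
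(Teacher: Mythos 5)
Your argument is correct and is essentially the standard textbook derivation: the paper states Proposition \ref{pp3} without proof, referring to \cite{IosKra_02, Khi_63, Khi_64}, where exactly this route --- the determinant identity $p_{n-1}q_n-p_nq_{n-1}=(-1)^n$ together with the M\"obius parametrization $x=\varphi_n(T^n(x))$ --- is what drives (P$_1$), (P$_4$), (P$_5$), with (P$_2$) and (P$_3$) as separate inductions on \eqref{recu}. One trivial slip: with the conventions $p_0=0$, $p_1=1$, $q_0=1$, $q_1=a_1$, the base case of the determinant identity is $p_0q_1-p_1q_0=0\cdot a_1-1\cdot 1=-1$, not $-a_1$; the sign-flipping induction and the conclusion $(-1)^n$ are unaffected.
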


The next two theorems connect continued fractions to the theory of one-dimensional Diophantine approximation.

\begin{thm}[Lagrange] The convergents of $x$ are optimal rational approximations of $x$ in the sense that
$$\min_{q<q_n(x), p\in \N}|qx-p|=|q_{n-1}(x)x-p_{n-1}(x)|.$$
\end{thm}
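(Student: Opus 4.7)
The plan is to prove Lagrange's theorem by exploiting the fact that consecutive convergents form a unimodular basis of $\Z^2$ and that the quantities $q_k x - p_k$ alternate in sign. Fix $n$ and write $(p_{n-1},q_{n-1}),(p_n,q_n)$ for the relevant convergents. From Property (P$_1$) we have $p_{n-1}q_n - p_n q_{n-1} = \pm 1$, so the $2\times 2$ integer matrix with these columns is unimodular. Consequently, for any integers $p,q$ there exist unique integers $a,b$ with
\[
q = a\, q_{n-1} + b\, q_n, \qquad p = a\, p_{n-1} + b\, p_n.
\]
Taking the difference between the relation multiplied by $x$ and the second relation gives the crucial identity
\[
qx - p \;=\; a\,(q_{n-1}x - p_{n-1}) \;+\; b\,(q_n x - p_n).
\]

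Next I would analyse the signs of $a$ and $b$ under the hypothesis $0 < q < q_n$. If $b = 0$, then $q = a\, q_{n-1}$ with $q > 0$ forces $a \geq 1$, and we get $|qx-p| = |a|\,|q_{n-1}x - p_{n-1}| \geq |q_{n-1}x - p_{n-1}|$ immediately. Otherwise $b \neq 0$: if $b \geq 1$, then $a\, q_{n-1} = q - b\, q_n < q_n - q_n \leq 0$, so $a \leq -1$; similarly if $b \leq -1$ then $a \geq 1$. In either case $a$ and $b$ are nonzero with opposite signs, and $|a| \geq 1$.

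The last ingredient is the sign alternation of the sequence $q_k x - p_k$. From Property (P$_4$), $q_{n-1}x - p_{n-1} = \frac{(-1)^{n-1}}{q_n + T^n(x) q_{n-1}}$ (with an analogous formula for $q_n x - p_n$), so consecutive terms have opposite signs. Combined with the opposite signs of $a$ and $b$, the two summands $a(q_{n-1}x - p_{n-1})$ and $b(q_n x - p_n)$ have the \emph{same} sign, hence do not cancel:
\[
|qx - p| \;=\; |a|\,|q_{n-1}x - p_{n-1}| \;+\; |b|\,|q_n x - p_n| \;\geq\; |q_{n-1}x - p_{n-1}|,
\]
since $|a| \geq 1$. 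This shows that the convergent $(p_{n-1},q_{n-1})$ attains the minimum in the claimed range, and the minimum value equals $|q_{n-1}x - p_{n-1}|$ by taking $(p,q) = (p_{n-1},q_{n-1})$ itself.

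The main (mild) obstacle is the case analysis in the second step: one must be careful to check that the strict inequality $q < q_n$, rather than $q \leq q_n$, is what guarantees $a \neq 0$ whenever $b \neq 0$, so that one never ends up with a single term that could be smaller than $|q_{n-1}x - p_{n-1}|$. Everything else is a clean consequence of unimodularity, the recursion (\ref{recu}), and the sign-alternating formula in Proposition~\ref{pp3}.
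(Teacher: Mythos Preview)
The paper does not supply its own proof of this statement; it records Lagrange's theorem as a classical fact, referring the reader to the standard texts \cite{IosKra_02, Khi_63, Khi_64}. Your argument is precisely the textbook proof (as in Khintchine's book): unimodularity of the pair $(p_{n-1},q_{n-1}),(p_n,q_n)$ followed by the sign analysis showing that the two summands in $qx-p=a(q_{n-1}x-p_{n-1})+b(q_nx-p_n)$ reinforce rather than cancel. The case analysis is handled correctly, including the observation that $q<q_n$ (strict) is exactly what forces $a\ne 0$ when $b\ne 0$. One small cosmetic point: Property~(P$_4$) as stated in the paper only gives the modulus $|q_{n-1}x-p_{n-1}|$; the sign alternation you invoke is most cleanly read off from~(P$_1$), which shows that $p_n/q_n$ lies alternately to the left and right of $x$ according to the parity of $n$. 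With that adjustment your proof is complete and correct.
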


\begin{thm}[Legendre]
\begin{equation}\label{Legendre}\Big|x-\frac pq\Big|<\frac1{2q^2}\Longrightarrow
 \frac pq=\frac{p_n(x)}{q_n(x)}\quad {\rm for\  some \ } n\geq 1.
\end{equation}\end{thm}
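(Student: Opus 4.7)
The strategy is to build the continued fraction expansion of $x$ so that $p/q$ appears as one of its convergents; the case $x=p/q$ is trivial, so I assume $x\ne p/q$. Write $p/q$ in lowest terms and expand it as a finite continued fraction $p/q=[a_1,\dots,a_n]$. Every rational admits two such expansions of opposite parity, via the identity $[a_1,\dots,a_n]=[a_1,\dots,a_n-1,1]$ (valid whenever $a_n\ge 2$), so I would choose the parity of $n$ such that $(-1)^n\bigl(x-\tfrac{p}{q}\bigr)>0$. Let $p_k/q_k$ ($0\le k\le n$) denote the convergents of this expansion.

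Next I introduce the real number $\omega$ uniquely determined (since $x\ne p_n/q_n$) by
\[ x \;=\; \frac{\omega\,p_n + p_{n-1}}{\omega\,q_n + q_{n-1}}. \]
Using the identity $p_{n-1}q_n-p_nq_{n-1}=(-1)^n$ recalled in Proposition~\ref{pp3}, a direct computation yields
\[ x-\frac{p_n}{q_n} \;=\; \frac{(-1)^n}{q_n(\omega\,q_n+q_{n-1})}. \]
Matching signs, the parity choice forces $\omega q_n+q_{n-1}>0$, and hence $|x-p/q|=1/\bigl(q_n(\omega q_n+q_{n-1})\bigr)$. The strict hypothesis $|x-p/q|<1/(2q^2)=1/(2q_n^2)$ then yields $\omega q_n+q_{n-1}>2q_n$, i.e.\ $\omega>2-q_{n-1}/q_n$; since $q_{n-1}\le q_n$, this gives $\omega>1$.

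With $\omega>1$ in hand, I would expand $\omega$ as a (finite or infinite) regular continued fraction $\omega=[a_{n+1},a_{n+2},\dots]$ with $a_{n+1}=\lfloor\omega\rfloor\ge 1$; substituting this back into the displayed identity produces $x=[a_1,\dots,a_n,a_{n+1},\dots]$, which by uniqueness is the continued fraction expansion of $x$ itself. In particular $p/q=p_n/q_n$ is the $n$-th convergent of $x$, as required. The main technical obstacle is the parity bookkeeping in the first step: one must verify that the two-parity representation always accommodates the sign of $x-p/q$. The sole exceptional case is $p/q=1$, which admits only the representation $[1]$, but this is harmless because $x\in[0,1)$ forces $x<1$, consistent with the odd parity $n=1$.
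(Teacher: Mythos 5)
The paper states Legendre's theorem as a classical fact and offers no proof of its own, so there is nothing internal to compare against; your argument is the standard textbook proof and it is correct. The key points are all in place: the parity adjustment using the two representations of a rational, the determinant identity $p_{n-1}q_n-p_nq_{n-1}=(-1)^n$ giving $x-\tfrac{p_n}{q_n}=\tfrac{(-1)^n}{q_n(\omega q_n+q_{n-1})}$, and the strict inequality $\omega>2-q_{n-1}/q_n\ge 1$, which guarantees $a_{n+1}=\lfloor\omega\rfloor\ge 1$ and (since the canonical expansion of $\omega>1$ never terminates in a $1$) that the concatenated expansion is the canonical expansion of $x$, so $p/q=p_n(x)/q_n(x)$. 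The only caveat is the degenerate approximant $p/q=0/1$, for which the conclusion holds only with $n=0$; this is an imprecision in the theorem's statement (``for some $n\ge 1$'') rather than a gap in your argument, and it is irrelevant to the way the theorem is applied in the paper.
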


As mentioned in \S 1.2,  $x\in [0,1]{\smallsetminus \Q}$ is $\psi$-Dirichlet improvable if and only if the partial quotients of $x$ do not grow too quickly. We reproduce the proof by Kleinbock \& Wadleigh \cite[Lemma 2.1]{KlWad16} for completeness.

\begin{lem}\label{blah}Let $\psi:\N\to \R_+$ be non-increasing and suppose $t\psi(t)<1$ for all large $t$. Then
\begin{align*}
x\in D(\psi) &\Longleftrightarrow |q_{n-1}x-p_{n-1}| \,<\, \psi(q_n) \ {\rm for\ all}\ n\gg 1 \\
&\Longleftrightarrow [a_{n+1}, a_{n+2},\dots]\cdot [a_n, a_{n-1},\dots, a_1] \,> \frac1{\Psi(q_n)}
\  \ {\rm for\ all} \ n\gg 1.\end{align*} \end{lem}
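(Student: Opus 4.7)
The proof splits naturally into two equivalences, and I would handle them in that order. The first, $x\in D(\psi) \Longleftrightarrow |q_{n-1}x - p_{n-1}| < \psi(q_n)$ for all $n\gg 1$, rests on Lagrange's best-approximation theorem. For the forward direction, fix $n$ large enough that $q_n$ exceeds the threshold $N$ appearing in the definition of $D(\psi)$, and invoke the definition at $t = q_n$: this produces a nontrivial integer pair $(p,q)$ with $|qx - p| < \psi(q_n)$ and $|q| < q_n$. The hypothesis $t\psi(t) < 1$ forces $\psi(q_n) < 1$, so the case $q = 0$ would give $|p| < 1$, hence $(p,q) = (0,0)$, contradicting nontriviality; after a sign flip we may assume $1 \le q < q_n$, so Lagrange's theorem yields $|q_{n-1}x - p_{n-1}| \le |qx-p| < \psi(q_n)$. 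For the converse, given any sufficiently large $t$, use $q_n \to \infty$ (property $\mathrm{P}_2$) to pick $n$ with $q_{n-1} < t \le q_n$; the pair $(p,q) = (p_{n-1}, q_{n-1})$ then satisfies $|q| < t$ and, by monotonicity of $\psi$, $|qx - p| < \psi(q_n) \le \psi(t)$, witnessing $x \in D(\psi)$.

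The second equivalence is a direct continued-fraction computation. Property $\mathrm{P}_4$ gives
$$|q_{n-1}x - p_{n-1}| \;=\; \frac{1}{q_n\bigl(1 + T^n(x)\cdot q_{n-1}/q_n\bigr)}.$$
Property $\mathrm{P}_3$ identifies $q_{n-1}/q_n = [a_n, a_{n-1}, \dots, a_1]$, and the shift action of the Gauss map gives $T^n(x) = [a_{n+1}, a_{n+2}, \dots]$. Substituting these in and rearranging, the inequality $|q_{n-1}x - p_{n-1}| < \psi(q_n)$ is equivalent to
$$[a_{n+1}, a_{n+2}, \dots]\cdot [a_n, a_{n-1}, \dots, a_1] \;>\; \frac{1}{q_n\psi(q_n)} - 1 \;=\; \frac{1 - q_n\psi(q_n)}{q_n\psi(q_n)} \;=\; \frac{1}{\Psi(q_n)},$$
where the last equality is simply the definition \eqref{twopsis} of $\Psi$.

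The main (and only) subtlety is the bookkeeping in the first equivalence: keeping the strict inequality $|q| < t$ straight, ruling out $q = 0$ via $t\psi(t) < 1$, and matching an arbitrary large $t$ to the correct convergent-index window via $q_n \to \infty$. The second equivalence carries no real difficulty and amounts to re-expressing $|q_{n-1}x - p_{n-1}|$ through the standard identities for continued fractions and then solving for the product of the two tails.
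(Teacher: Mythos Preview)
Your proof is correct and follows essentially the same approach as the paper: Lagrange's theorem for the first equivalence (with the paper omitting your careful exclusion of $q=0$), and the identities $(\mathrm{P}_3)$, $(\mathrm{P}_4)$ together with the definition of $\Psi$ for the second. The only difference is that you spell out the algebraic rearrangement in the second equivalence, which the paper dismisses as ``a simple computation.''
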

\begin{proof}

For the first equivalence, if $x$ is in $D(\psi)$, then for all large $n$   there exist $p,q\in \N$ with $q<q_n$ such that $$
|qx-p| \,<\,  \psi(q_n).
$$Then by Lagrange's theorem, $$
|q_{n-1}x-p_{n-1}|\le |qx-p| \,<\,  \psi(q_n).
$$
On the other hand, assume that $$|q_{n-1}x-p_{n-1}| \,<\,  \psi(q_n), \ {\text{for all}}\ n\gg 1.$$
Then for any  large $t$ 
 let $n$ be such that $q_{n-1}<t\le q_n$. 
 By the monotonicity of $\psi$, one has $$
|q_{n-1}x-p_{n-1}|  \,<\,   \psi(q_n)\le \psi(t).
$$
The second equivalence follows from \eqref{twopsis} and from (P$_3$) and (P$_4$) in Proposition~\ref{pp3} via a simple computation. \end{proof}

\subsection{Classical Jarn\'ik's  theorem for well approximable points}\label{aux}
\
Define \begin{equation*}
\mathcal K(\Psi):=\left\{x\in[0,1): \left|x-\frac pq\right|<\frac{1}{q^2\Psi(q)} \ {\rm for \ i.m. \ } (p, q)\in \Z\times \N \right\}.
\end{equation*}
This is just the set of {$\Phi$-approximable numbers if we take $\Phi(q)=\frac{1}{q\Psi(q)}$.

An elegant  zero-infinity  law for the Hausdorff measure of the sets $\mathcal K(\Psi)$ is due to Jarn\'ik \cite{Jar31}. 
We will need the following refined version:

\begin{thm}\label{Jarnik}
Let $\Psi$ be a non-increasing function, and let $f$ be a dimension function satisfying 
the following properties:
\begin{equation}\label{f2infty}
\lim_{x\to 0}\frac{f(x)}{x}=\infty,
\end{equation} 
and \begin{equation}\label{f2}
 \exists\, C \ge 1  \text{ such that } \frac{f(x_2)}{x_2}\le C\frac{f(x_1)}{x_1}  {\text{ whenever}}\ x_1< x_2\ll 1.
\end{equation} 
Then
\begin{equation}\label{sum1} \H^f\big(\mathcal K(\Psi)\big)=\begin {cases}
 0 \ & {\rm if } \quad
 \sum\limits_{t} {t}f\left(\frac{1}{t^2\Psi(t)}\right) \, < \, \infty; \\[2ex]
 \infty \ & {\rm if } \quad
 \sum\limits_{t} {t}f\left(\frac{1}{{t^2\Psi({t})}}\right)\, = \, \infty.
\end {cases}\end{equation}
\end{thm}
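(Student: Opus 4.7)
The plan is to prove the two halves of this zero--infinity law by the standard Jarn\'{i}k-type machinery, with condition \eqref{f2infty} separating us from the Lebesgue regime and \eqref{f2} providing the necessary regularity of $f$.

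For the convergence direction, I would start by writing the set as a genuine $\limsup$ of rational neighbourhoods,
\[
\mathcal{K}(\Psi) \;=\; [0,1)\cap \limsup_{q\to\infty}\, \bigcup_{p=0}^{q} B\!\left(\tfrac{p}{q},\tfrac{1}{q^2\Psi(q)}\right).
\]
Each generating ball has diameter $2/(q^2\Psi(q))$, and \eqref{f2} immediately gives the doubling estimate $f(2x)\le 2Cf(x)$ for all sufficiently small $x$, so for any fixed tail starting at $q\ge Q$ the $f$-sum over all admissible $(p,q)$ is comparable to $\sum_{q\ge Q} q\,f(1/(q^2\Psi(q)))$. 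By hypothesis this tail tends to zero, so Proposition \ref{bclem} (Hausdorff--Cantelli) gives $\H^f(\mathcal{K}(\Psi))=0$.

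For the divergence direction, the plan is to invoke the ubiquity / regular-systems framework (Beresnevich--Dickinson--Velani) applied to the classical fact that the rationals form a regular system in $[0,1)$ with index function $\rho(T)=T^{-2}$: for every interval $J\subset[0,1)$ and every sufficiently large scale $T$ one can find at least $\asymp |J|T^2$ rationals $p/q\in J$ with $q\le T$, pairwise separated by at least $1/T^2$. Combining this ubiquity statement with the divergence of $\sum_t t\,f(1/(t^2\Psi(t)))$ and the regularity conditions \eqref{f2infty}, \eqref{f2} on $f$ is exactly the hypothesis of the general dimension-function version of Jarn\'{i}k's theorem and delivers $\H^f(\mathcal{K}(\Psi))=\infty$. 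If one prefers a self-contained argument, the alternative is to build inside $\mathcal{K}(\Psi)$ a Cantor-like set $K$ at rapidly increasing scales $T_1 \ll T_2 \ll \cdots$, chosen so that the partial sums $\sum_{T_{n-1}\le t<T_n} t\,f(1/(t^2\Psi(t)))$ are uniformly large; inside each surviving cylinder of level $n-1$ one selects a maximal well-separated family of rationals $p/q$ with $q\in[T_{n-1},T_n]$, replaces each by the ball $B(p/q,1/(q^2\Psi(q)))$, and distributes mass uniformly among children to define a Borel probability measure $\mu$ on $K$. The main obstacle, and the step that genuinely requires both hypotheses on $f$, is verifying the local-mass bound $\mu(B(x,r)) \le K_0\, f(r)$ uniformly for small $r$, separately in the regimes where $r$ falls between consecutive levels and where $r$ is comparable to a ball at some level: condition \eqref{f2} provides the quasi-monotonicity of $f(r)/r$ needed to compare $\mu$-mass across scales, while \eqref{f2infty} rules out the Lebesgue regime and ensures that $f(r)$ eventually dominates any linear bound. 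Once this local-mass inequality is in place, the mass distribution principle yields $\H^f(\mathcal{K}(\Psi))\ge \H^f(K)>0$, and applying the same construction inside arbitrarily small subintervals of $[0,1)$ upgrades the conclusion to $\H^f(\mathcal{K}(\Psi))=\infty$.
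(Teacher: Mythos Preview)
Your proposal is correct, but it takes a different route from the paper's own argument. The paper does not re-prove Jarn\'ik's theorem from scratch; instead it observes that the modern proof of Jarn\'ik's theorem due to Beresnevich and Velani (combining Khintchine's classical Lebesgue-measure theorem with the Mass Transference Principle of \cite{BV06}) goes through almost verbatim under the weakened hypotheses \eqref{f2infty} and \eqref{f2}. Specifically, the paper pinpoints the single place in the proof of the Mass Transference Principle where monotonicity of $x\mapsto f(x)/x$ is invoked---the last inequality in formula~(29) of \cite{BV06}---and notes that this step still works if monotonicity is replaced by the quasi-monotonicity condition~\eqref{f2}. That is the entire content of the paper's sketch.

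Your convergence argument via Hausdorff--Cantelli is the standard one and is implicit in any proof. For divergence, your two proposed routes (the ubiquity framework of \cite{BDV06}, or a direct Cantor construction with the mass distribution principle) are both valid and you have correctly identified where conditions \eqref{f2infty} and \eqref{f2} enter. The trade-off is that your approaches are more self-contained but involve substantially more work, whereas the paper's approach is a one-line reduction to existing literature---at the cost of requiring the reader to open \cite{BV06} and locate inequality~(29). In particular, the paper's argument buys brevity and makes transparent \emph{why} strict monotonicity of $f(x)/x$ is unnecessary, while your Cantor-set sketch would give an independent proof that does not rely on the Mass Transference Principle at all.
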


\begin{proof}[Sketch of Proof]
Note that   the original formulation of Jarn\'ik (see e.g.\ \cite[\S 1.1]{BDV06}) assumes condition \eqref{f2} with $C = 1$, that is, the monotonicity of the function $x\mapsto \frac{f(x)}x$.
However it is not hard to see that it can be replaced with ``quasi-monotonicity" as in \eqref{f2}. For example, a modern proof of Jarn\'ik's theorem, due to Beresnevich \& Velani \cite[Theorem 2]{BV06},
 is given by a combination of {Khintchine's} classical theorem \cite{Khi_64} and  the Mass Transference Principle. One can observe, however, that in proving the
  latter theorem the monotonicity assumption on the function $x\mapsto \frac{f(x)}x$ is only used in the last step of the proof,  see the last inequality in the formula (29) {in \cite{BV06}}. {The latter} still works if 
\eqref{f2} is used instead.
\end{proof}}


\section{Proof of Theorem \ref{thm}: the divergence case}\label{S3}


Recall that $$
G(\Psi)=\Big\{x\in [0,1):  a_{n}(x)a_{n+1}(x)\ge \Psi(q_n) \ \ {\rm for \ i.m.}\ n\in \N\Big\}.
$$
To prove the divergence case  of Theorem \ref{thm} we first notice an obvious inclusion
$$G(\Psi)\supset \left\{x\in[0, 1): a_{n+1}(x)\,>\, \Psi(q_n)  \ \ {\rm for \ i.m.\ }n\in\N\right\}=:\,G_1(\Psi).$$

We can assume that $ \Psi(t)\ge 1$ for all $t\gg 1$. Otherwise, $\Psi(t)<1$ for all  large $t$ since we have assumed $\Psi$ to be non-decreasing. Then it is obvious that $G_1(\Psi)$, and thus $G(\Psi)$, contains all irrational numbers in $[0,1]$, and that the sum in Theorem \ref{thm} diverges.  

It is well known that $G_1(\Psi)$ contains $\mathcal K(3\Psi)$. The proof is rather short, so we include it for completeness.  Indeed, if there are infinitely many $(p,q)$ with  $$
|x-p/q|<\frac{1}{{3\Psi}(q)q^2} < \frac{1}{2q^2},
$$ then, by Legendre's theorem,  $$
\frac{p}{q}=\frac{p_n(x)}{q_n(x)}\ \ {\text{for some}}\ n\ge 1.
$$
Since $p_n, q_n$ are coprime, we must have $q_n\le q$. So, by the monotonicity of $\Psi$, $$
\big|x-\frac{p_n}{q_n}\big|=\big|x-\frac{p}{q}\big|<\frac{1}{{3\Psi}(q)q^2}\le \frac{1}{{3\Psi}(q_n)q_n^2}.
$$ On the other hand, in view of ($P_5$), $$
\big|x-\frac{p_n}{q_n}\big|\ge  \frac{1}{3a_{n+1}q_n^2}.
$$ This implies $
a_{n+1}> 
{\Psi}(q_n),
$ for infinitely many $n$, verifying the claim.

{Thus by Theorem \ref{Jarnik}} 
{one} will have
 $$
 \mathcal{H}^f\big(G(\Psi)\big)\ge \mathcal{H}^f\big(G_1(\Psi)\big)\ge \mathcal{H}^f\big(\mathcal K(3\Psi)\big)=\infty
 $$ {whenever {one} can show that the dimension function $f$ satisfies  conditions  \eqref{f2infty}  and (\ref{f2}), and that \begin{equation}\label{sum2}
  \sum\limits_{t} {t}f\left(\frac{1}{{3t^2\Psi({t})}} \right)
 = \infty.\end{equation} This is done via the following lemma. \begin{lem} Let $f$ be an essentially sub-linear   dimension function. Then both \eqref{f2infty}  and \eqref{f2}  hold.

 \end{lem}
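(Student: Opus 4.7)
The plan is to extract a uniform multiplicative estimate from the essentially sub-linear condition and then iterate it in two directions: shrinking $x$ to get \eqref{f2infty}, and comparing $f(x_1)/x_1$ with $f(x_2)/x_2$ for $x_1<x_2$ to get \eqref{f2}. Concretely, the definition \eqref{ffm10} guarantees a constant $B>1$ with $\mu:=\limsup_{x\to 0} f(Bx)/f(x)<B$. Pick any $\lambda\in(\mu,B)$; then there exists $x_0>0$ such that
\[
f(Bx)\le \lambda f(x)\qquad\text{for all }0<x<x_0.
\]
This single inequality will drive both conclusions because $\lambda/B<1$, which is exactly the slack that makes $f(x)/x$ blow up as $x\to 0$.

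For \eqref{f2infty}, I would iterate the inequality downward: writing $x=B^{-k}x_0$ and noting that each application multiplies $f$ by at most $\lambda$ while $x$ is multiplied by $1/B$, one gets
\[
\frac{f(B^{-k}x_0)}{B^{-k}x_0}\ \ge\ \left(\frac{B}{\lambda}\right)^{k}\frac{f(x_0)}{x_0}\ \xrightarrow[k\to\infty]{}\ \infty.
\]
To fill in intermediate values of $y\in(0,x_0)$, bracket $y\in[B^{-k-1}x_0,B^{-k}x_0)$ and use monotonicity of $f$ to bound $f(y)/y\ge B^{-1}f(B^{-k-1}x_0)/(B^{-k-1}x_0)$, which also tends to infinity.

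For \eqref{f2}, take $x_1<x_2<x_0$ and let $k\ge 0$ be the unique integer with $B^k x_1\le x_2<B^{k+1}x_1$. Combining the monotonicity of $f$ with $k+1$ applications of $f(B\,\cdot)\le \lambda f(\cdot)$ (all applied at points below $x_0$, since $B^j x_1\le x_2<x_0$ for $j\le k$) yields
\[
f(x_2)\ \le\ f(B^{k+1}x_1)\ \le\ \lambda^{k+1} f(x_1),\qquad x_2\ \ge\ B^k x_1,
\]
so that
\[
\frac{f(x_2)}{x_2}\ \le\ \lambda\left(\frac{\lambda}{B}\right)^{k}\frac{f(x_1)}{x_1}\ \le\ \lambda\,\frac{f(x_1)}{x_1}.
\]
Hence \eqref{f2} holds with $C=\max(1,\lambda)$.

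The only technical point that requires some care is the bookkeeping of the threshold: the estimate $f(Bx)\le\lambda f(x)$ is only available for $x<x_0$, so one must verify that the iterated applications needed for both parts stay within the valid range. The condition ``$x_1<x_2\ll 1$'' in \eqref{f2} absorbs this concern cleanly. Other than that, the argument is essentially a doubling-style comparison that converts the sub-linear growth of $f$ into both the divergence of $f(x)/x$ and its quasi-monotonicity.
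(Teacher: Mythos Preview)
Your proof is correct and follows essentially the same approach as the paper: extract from \eqref{ffm10} a uniform bound $f(Bx)\le\lambda f(x)$ for small $x$ with $\lambda<B$, then iterate it to get both the blow-up of $f(x)/x$ and the quasi-monotonicity \eqref{f2}. The only cosmetic difference is that for \eqref{f2} the paper places $x_1$ and $x_2$ in separate $B$-adic scales $[B^{-\ell},B^{-\ell+1})$ and $[B^{-k},B^{-k+1})$ (obtaining $C=B^2$), whereas you bracket the ratio $x_2/x_1$ directly and get the sharper constant $C=\lambda$; your handling of intermediate $y$ in the first part is also slightly more explicit than the paper's.
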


 \begin{proof} 
Condition  \eqref{ffm10} implies that there exist $\epsilon,\delta > 0 $ and $B > 1$ such that
 \begin{equation}\label{f3}0 < x < \delta\quad  \Longrightarrow \quad \frac{f(Bx)}{f(x)} < B - \epsilon.\end{equation}
Therefore for some $0 < x_0 < \delta$ and all $n\ge 1$ one has
$$
f(x_0/B^n) > f(x_0)/(B-\epsilon)^n \quad  \Longleftrightarrow \quad \frac{f(x_0/B^n)}{x_0/B^n} > \left(\frac B{B-\epsilon}\right)^n\frac{f(x_0)}{x_0}.
$$
This shows that
$f(x)/x\to \infty$ as $x\to 0$. 
As for (\ref{f2}), let  $x_1<x_2<\delta$. Assume $$B^{-k}\le x_2 < B^{-k+1}, \ B^{-\ell}\le x_1< B^{-\ell+1},\ {\text{with}}\ k\le \ell.$$
Then
\begin{align*}
\frac{f(x_2)}{f(x_1)} \cdot  \frac{x_1}{x_2} \le \frac{f(B^{-k+1})}{f(B^{-\ell})} \cdot \frac{B^{-\ell+1}}{B^{-k}}\le (B-\epsilon)^{\ell-k+1}\cdot B^{-\ell+k+1}\le B^2.
\end{align*} Therefore, $$
\frac{f(x_2)}{x_2} \le B^2 \cdot \frac{f(x_1)}{x_1},
$$
and (\ref{f2}) follows.
 \end{proof}

Finally, notice that (\ref{sum2}) is equivalent to  (\ref{sum1})  as  $f$ is increasing and, by (\ref{f3}),
has doubling property.
This settles the divergence case of Theorem \ref{thm}.}

 \section{Proof of Theorem \ref{thm}: the Convergence Case}

The set $G(\Psi)$ can be written in terms of the following basic cylinders:
$$
G(\Psi)=\bigcap_{N=1}^{\infty}\bigcup_{n=N}^{\infty}\bigcup_{a_1,\dots,a_n}J_n(a_1,\dots,a_n),
$$ where $$
J_n(a_1,\dots,a_n):=\bigcup_{a_{n+1} > \frac{\Psi(q_n)}{a_n}}I_{n+1}(a_1,\dots,a_n,a_{n+1}).
$$ Using (P$_1$) in Proposition \ref{pp3} and the recursive relation \eqref{recu},
the diameter of $J_n(a_1,\dots,a_n)$ can be bounded {as follows}:
 \begin{align*}
|J_n(a_1,\dots,a_n)|&=\sum_{a_{n+1} > \frac{\Psi(q_n)}{a_n}}\left|\frac{a_{n+1}p_n+p_{n-1}}{a_{n+1}q_n+q_{n-1}}-\frac{(a_{n+1}+1)p_n+p_{n-1}}{(a_{n+1}+1)q_n+q_{n-1}}\right|\\
&\le \left|\frac{\frac{\Psi(q_n)}{a_n}p_n+p_{n-1}}{\frac{\Psi(q_n)}{a_n}q_n+q_{n-1}}-\frac{p_n}{q_n}\right|=
\frac{1}{\left(\frac{\Psi(q_n)}{a_n}q_n+q_{n-1}\right)q_n}\\
&\le \frac{1}{\Psi(q_n)q_{n-1} q_n}.\end{align*}



Therefore, $$
\mathcal{H}^f\big(G(\Psi)\big)\le \liminf_{N\to \infty}\sum_{n\geq N}\sum_{a_1,...a_n}f\left(\frac{1}{\Psi(q_n)q_{n-1}q_{n}}\right).
$$

So, the remaining task is to estimate the summation over $$
\mathcal{A}_N:=\{(a_1,\dots,a_n): n\ge N\}.$$ We first partition {$\mathcal A_N$} by introducing a function $
g_N: \mathcal{A}_N\to \N^2$, defined as $$ {g_N}\big((a_1,\dots,a_n)\big)=\big(q_{n-1}(a_1,\dots,a_n), q_{n}(a_1,\dots,a_n)\big),
$$
where $q_n$ is defined by the recurrence relation \eqref{recu}. The following observations can readily be verified.

\smallskip

\begin{enumerate}[label=(\subscript{\rm Ob}{\arabic*})]

\item The function ${g_N}$ is two-to-one. This is because a rational number has two continued fraction  representations.
  More precisely, continued fraction expansions of rational numbers are not allowed to terminate in $1$; if $p/q$ has continued fraction expansion $[b_1,\dots,b_k]$, one must have $b_k\ge 2$. However it is also true that $$
    p/q=[b_1,\dots,b_{k}-1, 1].
    $$

    Now fix a positive integer vector $(p,q)$ such that $p/q$ has expansion $[b_1,\dots,b_{k}]$, and assume that ${g_N}\big((a_1,\dots,a_n)\big)=(p,q)$. Then by ($P_3$) in Proposition \ref{pp3},
$$
\frac{p}{q}=\frac{q_{n-1}(a_1,\dots,a_n)}{q_n(a_1,\dots,a_n)}=[a_n, a_{n-1},\dots,a_1],
$$ This gives a continued fraction representation of $p/q$. So, $$
(a_n,\dots,a_1)=(b_1,\dots,b_k)\ {\text{or}}\ (b_1,\dots,b_k-1,1).
$$ On the other hand, it is straightforward to check that $$
{g_N}\big((b_k,\dots, b_1)\big)={g_N}\big((1, b_k-1,\dots, b_1)\big)=(p,q).
$$

\item The range of ${g_N}$ is a subset of $$
\mathcal{C}_N:=\Big\{(p,q)\in \N^2: gcd(p,q)=1, \ 1\le p\le q,\  q\ge 2^{(N-1)/2}\Big\}.
$$
\item The following is a partition {of $\mathcal{A}_N$:} $$
\mathcal{A}_N=\bigcup_{(p,q)\in {\mathcal{C}_N}}g_N^{-1}(p,q).
$$

\end{enumerate}

As a result,  for a dimension function $f$, we have
\begin{align}\label{nick}
\mathcal{H}^f\big(G(\Psi)\big)\le& \liminf_{N\to \infty} \sum_{(p,q)\in \mathcal{C}_N}\sum_{g_N^{-1}(p,q)}f\left(\frac{1}{q_{n-1}q_n\Psi(q_n)}\right)\notag\\
\le & 2\liminf_{N\to \infty}\sum_{(p,q)\in \mathcal{C}_N}f\left(\frac{1}{pq\Psi(q)}\right) \notag\\ \le& 2\liminf_{N\to \infty}\sum_{q\ge 2^{(N-1)/2}}\sum_{1\le p\le q}f\left(\frac{1}{pq\Psi(q)}\right).\end{align}

It can be seen that if
 \begin{equation}\label{eqn1}\sum_{q=1}^\infty\sum_{1\le p\le q}f\left(\frac{1}{pq\Psi(q)}\right)<\infty,\end{equation}
 then it readily follows from Proposition \ref{bclem} that $\mathcal H^f\big(G(\Psi)\big)=0.$

To complete the proof of the convergence case, it remains to show that
\begin{equation*}
\sum_{q=1}^{\infty}\sum_{1\le p\le q}f\Big(\frac{1}{pq\Psi(q)}\Big)\quad  {\rm and} \quad \sum_{q=1}^{\infty}qf\Big(\frac{1}{q^2\Psi(q)}\Big)
\end{equation*}
 have the same convergence and divergence property.  It is straightforward to establish that  $$
\sum_{q=1}^{\infty}qf\Big(\frac{1}{q^2\Psi(q)}\Big)=\infty \Longrightarrow \sum_{q=1}^{\infty}\sum_{1\le p\le q}f\Big(\frac{1}{pq\Psi(q)}\Big)=\infty,
$$ since by the increasing property of $f$, $$
\sum_{1\le p\le q}f\Big(\frac{1}{pq\Psi(q)}\Big)\ge \sum_{1\le p\le q}f\Big(\frac{1}{q^2\Psi(q)}\Big)=qf\Big(\frac{1}{q^2\Psi(q)}\Big).
$$

So, to finish the proof of the convergence case, it remains to verify that \begin{equation}\label{ff7}
\sum_{q=1}^{\infty}qf\Big(\frac{1}{q^2\Psi(q)}\Big)<\infty \Longrightarrow \sum_{q=1}^{\infty}\sum_{1\le p\le q}f\Big(\frac{1}{pq\Psi(q)}\Big)<\infty.
\end{equation}

Clearly this implication is not true when $f(x)=x^s$ for $s=1$. This is also not true for {$f(x)=x \log(1/x)$, see 
Example \ref{xlogx}
below.} Therefore, a natural question is to classify dimension functions $f$ for which the assertion \eqref{ff7} holds. It turns out that this assertion is satisfied for essentially sub-linear    dimension functions.

 \begin{pro}\label{p1}
{Let $f$ be an essentially sub-linear   dimension function.}
Then the assertion in \eqref{ff7} is true.
\end{pro}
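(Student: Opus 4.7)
The plan is to exploit the equivalence (stated in the paper just after \eqref{ffm10}) of the essentially sub-linear condition with doubling at some exponent $\alpha<1$: there exist $\alpha\in(0,1)$ and $C>0$ such that $f(cx)\le Cc^{\alpha}f(x)$ for every $c\ge 1$ and every sufficiently small $x>0$. Granted this, \eqref{ff7} will follow from a simple scaling that compares each term $f(1/(pq\Psi(q)))$ to $f(1/(q^2\Psi(q)))$ and a standard integral estimate.

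Fix $q$ large enough that $T=T(q):=1/(q\Psi(q))$ lies in the range on which the doubling inequality applies; the contribution from the finitely many small values of $q$ is bounded (by continuity of $f$) and can be absorbed into the final constant. For each $1\le p\le q$ I would write $T/p=(q/p)\cdot (T/q)$ and apply the doubling bound with $c=q/p\ge 1$, yielding
\begin{equation*}
f\!\left(\frac{1}{pq\Psi(q)}\right)\le C(q/p)^{\alpha}f\!\left(\frac{1}{q^2\Psi(q)}\right).
\end{equation*}
Summing over $1\le p\le q$ and using the elementary bound $\sum_{p=1}^{q}p^{-\alpha}\le q^{1-\alpha}/(1-\alpha)+1$ (valid because $\alpha<1$) would then give
\begin{equation*}
\sum_{p=1}^{q}f\!\left(\frac{1}{pq\Psi(q)}\right)\le \frac{2C}{1-\alpha}\,q\,f\!\left(\frac{1}{q^2\Psi(q)}\right),
\end{equation*}
and summing over $q$ yields \eqref{ff7}.

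The crux of the argument is the equivalence between essentially sub-linear and sub-linear doubling. The paper states this as a remark, so I would either cite it or prove it in a short lemma by iterating \eqref{ffm10}: choosing $B'\in\bigl(\limsup_{x\to 0}f(Bx)/f(x),\,B\bigr)$ one obtains $f(B^{k}x)\le(B')^{k}f(x)$ for small $x$, and setting $\alpha:=\log B'/\log B<1$ and interpolating between consecutive powers of $B$ produces the uniform bound $f(cx)\le Cc^{\alpha}f(x)$ for $c\ge 1$. The strict inequality $\alpha<1$ is indispensable here: for $\alpha=1$ the series $\sum p^{-\alpha}$ produces an extra logarithmic factor, which is precisely why $f(x)=x$ and $f(x)=x\log(1/x)$ appear as genuine counterexamples in the discussion preceding the proposition.
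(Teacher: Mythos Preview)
Your proof is correct and is essentially the paper's argument repackaged: the paper decomposes the range $1\le p\le q$ into dyadic blocks $B^{-k}q<p\le B^{-k+1}q$ and applies the essentially sub-linear inequality \eqref{ffm10} directly to obtain a geometric series with ratio $<1$, whereas you first iterate \eqref{ffm10} into the equivalent doubling form $f(cx)\le Cc^{\alpha}f(x)$ with $\alpha<1$ and then bound $\sum_{p\le q}(q/p)^{\alpha}\ll q$. Both routes encode the same geometric decay, and both are legitimate because the largest argument $1/(pq\Psi(q))\le 1/(q\Psi(q))$ stays in the small-$x$ regime for large $q$, so the iteration of \eqref{ffm10} is valid throughout.
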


{\em{Proof}}. 
Fix $b<B$ such that \begin{equation}\label{ff8}
\frac{f(Bx)}{f(x)}<b  \ {\text{when}} \ x<x_0. 
\end{equation}  Let $$q_0 {>} \max\{x_0^{-1}, B\},$$ which is designed
{so that} the inequality (\ref{ff8}) {can be utilized} later.

Now for each $q\ge q_0$ we estimate the inner summation in the series $$
\sum_{q=1}^{\infty}\sum_{1\le p\le q}f\Big(\frac{1}{pq\Psi(q)}\Big).
$$
Let $t$ be the integer such that $B^{t-1}\le  q< B^{t}$. Then \begin{align*}
\sum_{1\le p\le q}f\Big(\frac{1}{pq\Psi(q)}\Big)&=\sum_{k=1}^t\sum_{B^{-k} q< p\le B^{-k+1}q}f\Big(\frac{1}{pq\Psi(q)}\Big)\\
&\le \sum_{k=1}^tB^{-k+1}qf\Big(\frac{B^{k}}{q^2\Psi(q)}\Big){=}B\sum_{k=1}^tC_k,
\end{align*}
{where $C_k := B^{-k}qf\Big(\frac{B^{k}}{q^2\Psi(q)}\Big)$}. Notice that for any $k<t$, \begin{align*}\frac{C_{k+1}}{C_k}=\frac{B^{-k-1}qf\Big(\frac{B^{k+1}}{q^2\Psi(q)}\Big)}{B^{-k}qf\Big(\frac{B^{k}}{q^2\Psi(q)}\Big)}:=\frac{f(Bx)}{Bf(x)}<\frac{b}{B},
\end{align*} since $$
x:=\frac{B^{k}}{q^2\Psi(q)}\le \frac{1}{q\Psi(q)}<x_0.
$$
Thus for any $1\le k\le t$, $$
{C_k}\le \left(\frac{b}{B}\right)^{k-1}C_1.$$
As a result, $$
\sum_{k=1}^tC_k\le \sum_{k=1}^{t}\left(\frac{b}{B}\right)^{k-1}C_1\le cC_1=\frac{c}{B}qf\Big(\frac{B}{q^2\Psi(q)}\Big)\le \frac{cb}{B}\cdot qf\Big(\frac{1}{q^2\Psi(q)}\Big).
$$

In summary, we have proved $$
\sum_{1\le p\le q}f\Big(\frac{1}{pq\Psi(q)}\Big)\le cb\cdot qf\Big(\frac{1}{q^2\Psi(q)}\Big).
$$ So, the desired assertion follows, and the proof of  Theorem \ref{thm} is thus completed.



\section{Final comments and open problems}\label{S4}
Our approach to the problems discussed in this paper is reasonably general. Although, together with the results of \cite{KlWad16}, we  {have almost} complete information {on the size of sets} of Dirichlet non-improvable real numbers in {the one-dimensional} setting, there  are still some open problems which could improve the current state of knowledge.
We list some of them here.

\subsection{} \label{notesl} As stated earlier, the 
 functions $f(x)=x$ and $f(x)=x\log(1/x)$ are not essentially sub-linear. For these particular examples, {the argument  of this paper only leads} to a weaker/incomplete characterization {of} $\mathcal{H}^f(D^c(\psi))$. For clarity we {emphasize it in the} two examples below.

\noindent
\begin{exa}[{{\rm The  case}  $f(x)=x$}] {\rm Notice that the estimate \eqref{nick}  is valid for any dimension function. Hence when we substitute $f(x)=x$, we get
$\mathcal{H}^1\big(G(\Psi)\big)=0$ if
  \begin{equation}\label{weak}
 \sum_{t}
 \frac{\log t}{t\Psi(t)}<\infty.
\end{equation}
This is, however, weaker than what is proved in \cite{KlWad16}. Indeed, it is shown in \cite[Corollary 3.7]{KlWad16} that the Lebesgue measure of $G(\Psi)$ is zero/full if the series
\begin{equation}\label{lebesgue}
\sum_t \frac{\log {\Psi}(t)}{{t\Psi}(t)}\end{equation} converges/diverges. It is not hard to see that assuming that $\Psi: \N \rightarrow \R_+$ is non-decreasing, the convergence condition \eqref{weak} implies that the series \eqref{lebesgue} converges.
The converse implication  is not always true.   One counterexample (similar to the function in {Remark \ref{rem1}})
 is given by
$$
\Psi(q)=\log q (\log\log q)^{2+\epsilon}, \ \ \epsilon>0.
$$
For this example,
\begin{align*}
&\sum_{q=1}^{\infty}\frac{\log q}{q\log \Psi}\asymp \sum_{q=1}^{\infty}\frac{1}{q(\log\log q)^{2+\epsilon}}=\infty;\\
&\sum_{q=1}^{\infty}\frac{\log \Psi}{q\Psi(q)}\asymp\sum_{q=1}^{\infty}\frac{1}{q\log q (\log\log q)^{1+\epsilon}}<\infty.
\end{align*}}
\end{exa}


\begin{exa} 
[{{\rm The  case}  $f(x)=x\log(1/x)$}]\label{xlogx}
{\rm Similar to the above example, for this choice of the dimension function $f$ our argument again gives an incomplete result. Substituting $f$ into  \eqref{sum2} for the divergence part and into \eqref{nick}  for the convergence part, we get that \begin{equation}\label{s4} \H^f\big(\mathcal D^c(\psi)\big)=\begin {cases}
 0 \ & {\rm if } \quad
 \sum\limits_{t} \frac{\log^2 t}{t\Psi(t)} \, < \, \infty, \\[2ex]
 \infty \ & {\rm if } \quad
 \sum\limits_{t} \frac{\log t}{{t\Psi({t})}}\, = \, \infty.
\end {cases}\end{equation}}
\end{exa}

David Simmons communicated to us that, by using a slightly different method, 
the convergent case of the dichotomy \eqref{s4} can be improved to   \[\sum_t \frac{\log t \log(\Psi(t))}{t \Psi(t)} \, < \, \infty.\]
His method is applicable to a certain class of 
not essentially sub-linear dimension functions.
Obtaining an analogue of Theorem \ref{thm} for all 
dimension functions remains an open problem.

\subsection{} Recall that, to prove the divergence case of Theorem \ref{thm},  we first {showed}
that the set of well approximable points with a dilated approximating function is contained in the set $G(\Psi)$, i.e.
$$\mathcal K(3\Psi)\subset G(\Psi).$$
So that, trivially, we have the divergence case of classical Jarn\'ik's theorem  at our disposal.
%
 This raises an immediate question:

\begin{prob}\label{sizedifference}
 How big is the set $G(\Psi)\smallsetminus \mathcal K(3\Psi)$?
  \end{prob}

We are in a position to claim the solution to this problem{\color{red},} which  will be the subject of a forthcoming article. To be precise, using a Cantor-type construction and the mass distribution principle \cite{F_14} we can prove
{the following result:}
\begin{thm} Let ${C}>0$. Then for $\Psi$ as above we have $$
\hdim \Big(G(\Psi)\smallsetminus \mathcal K({C}\Psi)\Big)=\frac{2}{\tau+2}, \text{ where } \ \tau=\liminf_{q\to \infty}\frac{\log \Psi(q)}{\log q}.
$$
  \end{thm}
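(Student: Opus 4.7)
The upper bound $\hdim(G(\Psi)\smallsetminus\mathcal K(C\Psi))\le\frac{2}{2+\tau}$ comes for free from the inclusion $G(\Psi)\smallsetminus\mathcal K(C\Psi)\subseteq G(\Psi)$: Theorem~\ref{thm} specialised to $f(x)=x^s$, together with the definition of $\tau$ as a $\liminf$, yields $\hdim G(\Psi)=\frac{2}{2+\tau}$. So all the work lies in the matching lower bound.

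The guiding observation for the lower bound is property $(P_5)$ of Proposition~\ref{pp3}: the inequality $a_{n+1}(x)\le \tfrac{C}{3}\Psi(q_n(x))$ for every sufficiently large $n$ forces $|x-p_n/q_n|\ge \tfrac{1}{Cq_n^2\Psi(q_n)}$ for all large $n$. Combined with Legendre's theorem --- which ensures any non-convergent rational satisfies $|x-p/q|\ge \tfrac{1}{2q^2}\ge \tfrac{1}{Cq^2\Psi(q)}$ as soon as $\Psi(q)>2/C$ --- this forces $x\notin \mathcal K(C\Psi)$. Hence it suffices to exhibit a set of dimension $\tfrac{2}{2+\tau}$ inside
\[
\Bigl\{x\in[0,1):\ a_n(x)a_{n+1}(x)>\Psi(q_n)\text{ for i.m.\ }n,\ \text{and}\ a_{n+1}(x)\le \tfrac{C}{3}\Psi(q_n)\text{ for all large }n\Bigr\}.
\]

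The plan is a Cantor-type construction analogous to those used for $G_1(\Psi)$, but with the largeness of the partial quotients distributed between two consecutive indices. Fix an integer parameter $M$ (to be sent to infinity) and a very sparse increasing sequence $\{n_k\}$ chosen so that $\log\Psi(q_{n_k})/\log q_{n_k}\to \tau$ and $n_{k+1}-n_k\to\infty$. At every index $n\notin\{n_k,n_k+1\}_k$, let $a_n$ range freely over $\{1,\dots,M\}$; at the paired sparse indices, prescribe
\[
a_{n_k}\in[b_k,2b_k],\qquad a_{n_k+1}\in[c_k,2c_k],
\]
with $b_k,c_k$ calibrated so that $b_kc_k>\Psi(q_{n_k})$ while both $b_k$ and $c_k$ remain bounded by $\tfrac{C}{3}\Psi(q_{n_k})$; the balanced choice $b_k\asymp c_k\asymp\sqrt{\Psi(q_{n_k})}$ is natural. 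The membership in $G(\Psi)$ and the exclusion from $\mathcal K(C\Psi)$ are then immediate. Next, I would equip the resulting Cantor set with a Frostman-type probability measure $\mu$ obtained by spreading mass uniformly over the admissible branches at every level, use the cylinder-length bound $|I_{n_k+1}|\asymp q_{n_k+1}^{-2}$ from $(P_1)$ together with the recursion~\eqref{recu} to estimate $q_{n_k+1}\asymp c_k b_k q_{n_k-1}$, and invoke the mass distribution principle (cf.\ \cite{F_14}) to derive a local-dimension bound tending to $\tfrac{2}{2+\tau}$ as $M\to\infty$.

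The main obstacle is the Frostman exponent computation: because one must inflate both $a_{n_k}$ and $a_{n_k+1}$ (and not only $a_{n_k+1}$ as in the classical treatment of $G_1(\Psi)$), the recursion linking $q_{n_k}$ and $\Psi(q_{n_k})$ is coupled, and one has to verify carefully that the local dimension of $\mu$ still attains the critical value $\tfrac{2}{2+\tau}$ in the limit $M\to\infty$. A related delicate point is the selection of the sparse sequence $\{n_k\}$: since $\tau$ is defined through a $\liminf$, one has to extract a subsequence of integers $N$ along which $\Psi(N)$ is essentially of order $N^{\tau}$ and then align the inductively-defined $q_{n_k}$ with such $N$, making sure the intervening free-choice blocks do not accelerate $q_n$ in a way that destroys this alignment.
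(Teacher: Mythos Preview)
The paper does not actually prove this theorem: in \S5.2 it is announced as ``the subject of a forthcoming article'', with only the phrase ``using a Cantor-type construction and the mass distribution principle'' given as a hint of method. Your outline follows exactly this announced route --- the upper bound for free from $G(\Psi)\smallsetminus\mathcal K(C\Psi)\subset G(\Psi)$ and Theorem~\ref{thm}, and the lower bound via a Cantor set equipped with a Frostman measure --- so at the level of strategy your proposal and the paper's intended proof coincide, and there is nothing further to compare against.

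That said, your write-up is explicitly a sketch, and the two ``obstacles'' you flag are genuine and must be resolved for a complete argument. One additional circularity worth naming: the calibration $b_k\asymp c_k\asymp\sqrt{\Psi(q_{n_k})}$ is self-referential, since $q_{n_k}=a_{n_k}q_{n_k-1}+q_{n_k-2}$ already depends on $a_{n_k}\in[b_k,2b_k]$. In practice one should define $b_k,c_k$ in terms of $q_{n_k-1}$ (and the target exponent $\tau$) and then check $b_kc_k>\Psi(q_{n_k})$ and $b_k\le\tfrac{C}{3}\Psi(q_{n_k-1})$ a posteriori; this is routine once the subsequence $\{n_k\}$ is chosen so that $\Psi$ behaves like $q^{\tau}$ along the relevant scales, but it does need to be written out carefully, especially when $\tau$ is large.
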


\subsection{} Notice that the set $G(\Psi)$ contains those real numbers whose approximation properties are tied up with the growth of the product of  
{pairs of consecutive partial quotients}. However the growth rate is a function of the denominator of the rational approximates. What happens if the approximating function is just a function of the index $n$, i.e.\ if one considers the set
\[F(\varphi):=\left\{x\in [0, 1): a_n(x)a_{n+1}(x)\geq \varphi(n) \ \ {\rm for \ infinitely \ many \ }n\in\N\right\},\]
where $\varphi:\N\to\R_+$ is an arbitrary positive function? Here we have chosen the function $\varphi$ to distinguish it from the the function $\Psi$ which is biased to the relation \eqref{twopsis}.

\begin{prob}\label{sizeF} How big is the set $F(\varphi)$?
\end{prob}

{Note that solving}  Problem \ref{sizeF} in terms of Hausdorff measure and dimension  would not imply Theorem 
 \ref{thm}. Therefore, although interesting on its own, {a} solution to this problem {would} not contribute much to our understanding of the size of   {sets} of Dirichlet non-improvable real numbers in terms of Hausdorff measure or Hausdorff dimension. With regards to the Lebesgue measure of this set, it has been addressed already in \cite[Theorem 3.6]{KlWad16}.

It is worth pointing out that the Hausdorff dimension of the set \[
  \left\{x\in [0, 1): a_n(x)\geq \varphi(n) \ \ {\rm for \ infinitely \ many \ }n\in\N\right\}\subset F(\varphi)\]
has already been 
{computed} by Wang  \& Wu \cite{WaWu08}. It is expected that the Hausdorff dimension of   $F(\varphi)$ can be determined using similar methods. 
In fact, we have {made substantial} progress towards this problem, 
which will be the subject of a forthcoming article.



{}



\subsection{}
Another natural question is {a} generalization of the results of this paper and{ those} of \cite{KlWad16} to {the {higher-dimensional} setting}. For further details about {such a} generalization in {the} Lebesgue measure context we refer the reader to the last section of \cite{KlWad16}. {Once Lebesgue measure is settled for higher dimensions, one may ponder Hausdorff measure in this setting.} 

\def\cprime{$'$} \def\cprime{$'$} \def\cprime{$'$} \def\cprime{$'$}
  \def\cprime{$'$} \def\cprime{$'$} \def\cprime{$'$} \def\cprime{$'$}
  \def\cprime{$'$} \def\cprime{$'$} \def\cprime{$'$} \def\cprime{$'$}
  \def\cprime{$'$} \def\cprime{$'$} \def\cprime{$'$} \def\cprime{$'$}
  \def\cprime{$'$} \def\cprime{$'$} \def\cprime{$'$} \def\cprime{$'$}
  \def\cprime{$'$}

\end{document}

\begin{equation}\label{ff4}
\sum_{q\ge 1}\frac{\log q}{q\Psi(q)}<\infty \Longrightarrow \sum_{q\ge 1}\frac{\log \Psi(q)}{q\Psi(q)}<\infty.
\end{equation}
It is natural to compare this sum condition with the integral appearing in Theorem \ref{KW} to see which one is stronger. To do this we need to express Theorem \ref{KW} in an alternative form.

{\color{red}Like Khintchine's original proof regarding Lebesgue measure of well approximable real numbers (1924) in metric Diophantine approximation, the proof of Theorem \ref{KW} contains a kind of Borel-Bernstein theorem as an essential ingredient:}
\begin{thm}[Kleinbock--Wadleigh]\label{KW2}Let $\varphi:\N\to\R_+$ be non-decreasing. Define
$$F(\varphi)=\left\{x\in[0, 1): a_n(x)a_{n+1}(x)\geq \varphi(n)\ \  {\rm for \ i.m.} \ n\in \N\right\}.$$
Then $F(\varphi)$ is null or full according as the series $\sum_{n\geq 1}\frac{\log\varphi(n)}{\varphi(n)}$ converges or diverges respectively.
\end{thm}

Next we restate Theorem \ref{KW2} by considering the approximating function $\Psi$.

\begin{thm}[Kleinbock--Wadleigh]\label{KW3} Let $\Psi:\R_+\to \R_+$ be non-decreasing. Define
$$G(\Psi)=\left\{x\in[0, 1): a_n(x)a_{n+1}(x)\geq \Psi(q_n)\ \  {\rm for \ i.m.} \ n\in \N\right\}.$$
Then $G(\Psi)$ is Lebesgue null or full according as the series $\sum_{q\geq 1}\frac{\log\Psi(q)}{q\Psi(q)}$ converges or diverges respectively.
\end{thm}
This theorem is not explicitly stated in Kleinbock \& Wadleigh's paper, but it can be established easily by using arguments similar to those used in the proof of Theorem \ref{KW2}.

\comwang{I donot think it is necessary to reproduce a proof here. I suggest to state this in [KW].}
\begin{proof}[Proof of Theorem \ref{KW3}]
  By a result of Khintchine \cite[\S 4]{Khi_64}, there exists a positive number $B>1$ such that for almost all $x$, $q_n(x)\leq B^n$ for all sufficiently large $n$. Also $q_n\geq 2^{(n-1)/2}\geq b^n$ for all $n\geq 3$, by choosing $b=\sqrt{1.5}$.  By using Cauchy's condensation argument, it is straightforward to see that  {\color{red}\begin{equation}\label{codiv}\sum_{n\geq 1}\frac{\log\Psi(n)}{n\Psi(n)}= \infty  \quad \text{if and only if} \quad \sum_{n\geq 1}\frac{\log\Psi(B^n)}{\Psi(B^n)}=\infty.\end{equation}}

{\color{red}Thus if the sum in Theorem \ref{KW3} diverges, Theorem \ref{KW2} implies that the set}
$$\left\{x\in[0, 1): a_n(x)a_{n+1}(x)\geq \Psi(B^n)\ \  {\rm for \ i.m.} \ n\in \N\right\}$$
has full measure. This further implies that the set
$$\left\{x\in[0, 1): a_n(x)a_{n+1}(x)\geq \Psi(B^n)\ \  {\rm for \ i.m.} \ n\in \N\right\}\cap\left\{x\in[0, 1): q_n(x)\leq B^n \ \  {\rm for \ } \ n\gg 1\right\}$$
has full measure.  Since $\Psi$ is non-decreasing, the intersection is clearly a subset of $G(\Psi)$. Hence $G(\Psi)$ has full measure.
\medskip

  {\color{red}Conversely, if the sum in Theorem \ref{KW3} converges, \ref{codiv} and Theorem \ref{KW2} imply that }

$$\left\{x\in[0, 1): a_n(x)a_{n+1}(x)\geq \Psi(b^n)\ \  {\rm for \ i.m.} \ n\in \N\right\}$$
is null. Since $\Psi(b^n)\leq \Psi(q_n)$, clearly
$$G(\Psi)\subset \left\{x\in[0, 1): a_n(x)a_{n+1}(x)\geq \Psi(b^n)\ \  {\rm for \ i.m.} \ n\in \N\right\}.$$
Therefore $G(\Psi)$ is null too.

\end{proof}

\begin{lem}{\color{red}Suppose  $\Psi: \N \rightarrow \R_+$ is non-decreasing.} Then \begin{equation}\label{ff4}
\sum_{q\ge 1}\frac{\log q}{q\Psi(q)}<\infty \Longrightarrow \sum_{q\ge 1}\frac{\log \Psi(q)}{q\Psi(q)}<\infty.\end{equation}
\end{lem}
\begin{proof}
By the monotonic assumption, we can ask that $\Psi(q)\to \infty$ as $q\to \infty$. Otherwise, both of the above series diverge.

{\color{red}Fix $1>\epsilon>0$.} Since $\Psi(q)\to \infty$, then there exists $q_0$ such that for all $q\ge q_0$, \begin{equation}\label{ff3}
\frac{\log \Psi(q)}{\Psi(q)}\le \left(\frac{1}{\Psi(q)}\right)^{1-\epsilon}.
\end{equation} Without loss of generality, we assume that (\ref{ff3}) is true for all $q\ge 1$.
Trivially,
\begin{align*}
\sum_{q\ge 1}\frac{\log \Psi(q)}{q\Psi(q)}=\sum_{\{q: \Psi(q)\le q^{1+\epsilon}\}}\frac{\log \Psi(q)}{q\Psi(q)}+\sum_{\{q: \Psi(q)>q^{1+\epsilon}\}}\frac{\log \Psi(q)}{q\Psi(q)}:=I_1+I_2.
\end{align*} {\color{red}If the first sum in \ref{ff4} converges, we have}
$$I_1 \le (1+\epsilon)\sum_{\{q: \Psi(q)\le q^{1+\epsilon}\}}\frac{\log q}{q\Psi(q)}\le (1+\epsilon)\sum_{q\ge 1}\frac{\log q}{q\Psi(q)}<\infty,$$

{\color{red}while in any case we have}

$$I_2 \le \sum_{\{q:\Psi(q)>q^{1+\epsilon}\}}\frac{1}{q\Psi(q)^{1-\epsilon}}\le \sum_{\{q:\Psi(q)>q^{1+\epsilon}\}}\frac{1}{q^{2-\epsilon^2}}<\infty.$$

This proves the lemma. \end{proof}

The converse implication in (\ref{ff4}) is not always true. {\color{red}  One counterexample (similar to the function in the remark following Theorem \ref{dicor})
 is given by}
$$
\Psi(q)=\log q (\log\log q)^{2+\epsilon}, \ \ \epsilon>0.
$$

\begin{thm}[\cite{Khi_64}, Khtinchine] Let $\Phi$ be a non-increasing function and define the set of $\Phi$-well approximable points as $$
W(\Phi)=\Big\{x\in [0,1]: |x-p/q|<\Phi(q), \ {\text{i.m.}}\ (p,q)\in \Z\times \N\Big\}.
$$
Then $$ \lambda\big(W(\Phi)\big)=\begin {cases}
 0 \ & {\rm if } \quad
 \sum\limits_{t} {t}\Phi({t}) \, < \, \infty, \\[2ex]
 1 \ & {\rm if } \quad
 \sum\limits_{t} {t}{{\Phi({t})}}\, = \, \infty.
\end {cases}$$
\end{thm}

\begin{thm}[\cite{BV06}, Mass Transference Principle] Let $d\ge 1$ be an integer and $f$ a dimension function such that $f(x)/x^d$ is monotonic. Define $B^f(x,r)=B(x, f^{1/d}(r))$. Let $\{B_i\}_{i\geqslant 1}$ be a sequence of balls in $[0,1]^d$ with $|B_i|\to 0$ as $i\to \infty$. Suppose that for any ball $B\subseteq [0,1]^d$ \begin{equation}\label{f1} \lambda(B\cap\limsup_{i\to\infty} B_i^f)=\lambda(B).\end{equation} Then for any ball $B\subseteq\mathbb{R}^d$, \begin{equation*} \mathcal{H}^f(B\cap\limsup_{i\to \infty} B_i)=\mathcal{H}^f(B).\end{equation*}
\end{thm}

\begin{rem}To cater for our use, we find that the mass transference principle is still valid if we change the condition on the dimension function $f$ to the followings. \begin{equation}\label{f2}
\lim_{x\to 0}\frac{f(x)}{x^d}=\infty,\ \ \frac{f(x_2)}{x_2^d}\le C\frac{f(x_1)}{x_1^d}, {\text{with}}\ x_1< x_2\ll 1
\end{equation} for a universal constant $C>0$.

Equipped with the first condition in (\ref{f2}), in proving the mass transference principle in \cite{BV06}, the monotonic assumption on $f(x)/x^d$ is only used for the last inequality of the proof (see page 990, line 10, in \cite{BV06}). The inequality still works if the monotonicity of $f(x)/x^d$ is replaced by the latter condition in \ref{f2}.
\end{rem}

So, we have the following version of Jarn\'ik's theorem which is designed for our use.
\begin{thm}\label{Jarnik} Let $f$ be a dimension function satisfying (\ref{f2}).
 Then
$$ \H^f\big(\mathcal K(\Psi)\big)=\begin {cases}
 0 \ & {\rm if } \quad
 \sum\limits_{t} {t}f\left(\frac{1}{t^2\Psi(t)}\right) \, < \, \infty, \\[2ex]
 \infty \ & {\rm if } \quad
 \sum\limits_{t} {t}f\left(\frac{1}{{t^2\Psi({t})}}\right)\, = \, \infty.
\end {cases}$$
\end{thm}}